\numberwithin{equation}{section}
\begin{document}
\title{SPIN GRAPHS}
\author{K.~M.Bugajska}
\address{Department 0f Mathematics and Statistics,
York University,
Toronto, ON, M3J 1P3}
\begin{abstract} 
We show that on any Riemann surface $\Sigma$ of genus $g>1$ any non-singular even spin bundle $\xi_{\epsilon}$  defines an $\epsilon$-foliation of $\Sigma$.  When a surface is hyperelliptic then all leaves of this foliation are finite and almost all of them consist of $2g+2$ points.  Moreover, each leaf carries an additional structure which allows us to view it as a concrete graph.  We find the properties of these spin-graphs and we describe the classification of  surfaces which is given by these properties.  The classification  is based on a finite number of exceptional graphs which have to be present on any  surface $\Sigma$ of genus $g\geq{2}$.
\end{abstract}

\maketitle

\section{INTRODUCTION}

We will show that for any point ${P}\in{\Sigma}$ any non-singular even spin bundle $\xi_{\epsilon}$  on $\Sigma$   determines the spin-graph 
${\mathcal{S}}^{\epsilon}_{P}$.  For almost all non Weierstrass points of ${\Sigma}$ the spin-graphs are isomorphic to each other, that is, they have the same generic form $\mathcal{S}(g)$ , where g is the genus of a surface $\Sigma$. In addition, there are two spin-graphs through the Weierstrass points and besides we must  have "`exceptional"' spin graphs which involve "`exceptional points"' (at most 4g of them).

In this paper we will consider only hyperelliptic Riemann surfaces. The image of any point $P$ of a surface $\Sigma$ under the hyperelliptic involution is denoted by $\widetilde{P}$. The set of the Weierstrass points (i.e.points with the property $P=\widetilde{P}$) will be denoted by $\mathcal{W}$.

Let $\mathcal{S}^{\epsilon}_{P}$ be a spin graph through a point $P\in{\Sigma}$  and let $\{\mathcal{S}^{\epsilon}_{P}\}$ denote the set of its vertices.  Any spin-graph ${\mathcal{S}}^{\epsilon}_{P}$ consists of vertices, edges, faces and n-cells ($n\leq{(g-1)}$).  For each  vertex $Q\epsilon{\{\mathcal{S}^{\epsilon}_{P}\}}$  the graph ${\mathcal{S}}^{\epsilon}_{P}$  allows us  to read the divisor of the unique meromorphic section of $\xi_{\epsilon}$ with the single, simple pole at $Q$.  Besides, it is shown in ~\cite{KB13}, ~\cite{KMB13}, that $\mathcal{S}^{\epsilon}_{P}$ carries a structure which  allows us to associate to any of its vertex $Q$ a group $\mathcal{G}_{\epsilon}(Q)$  of permutations acting transitively on the set $\widehat{Q}_{\epsilon}:=\Phi_{Q}(\Sigma)\cap{\Theta}_{\epsilon}$. (Here $\Phi_{Q}:{\Sigma}\rightarrow{Jac{\Sigma}}$ denotes the Jacobi mapping with the origin at $Q$ and $\Theta_{\epsilon}$ is the divisor of the theta function $\Theta[\epsilon]$ on Jac$\Sigma$. ~\cite{FK01}, ~\cite{DM84}) 

A generic spin graph $\mathcal{S}(g)$ has $2g+2$ vertices and all edges between the appropriate points are simple and straight. Each of two graps through the Weierstrass points has $g+1$ vertices which are connected to each other by straight, simple edges. The vertices of an exceptional spin graph will be called $\epsilon$-exceptional points.  Any exceptional spin graph on a surface of genus $g$ is characterized by an integer $0\leq{r}\leq{g-1}$.

We will find all possible isomorphic classes of exceptional spin-graphs on a surface of genus $g$ and we will describe properties of such graphs that allow us to classify hyperelliptic surfaces.  More precisely, to each surface $\Sigma$  equipped with a nonsingular even spin structure $\xi_{\epsilon}$ we will associate an $M$-tuple of integers $m_{r,s}$  which have to satisfy some condition  (see $(4.1)$ and $(4.6)$).   Here,  $r=0,1,\ldots{g-1}$;  $s=0,1,\ldots{N(r)}$ with  $N(r)$  equal to the number $\sigma_{r+1}(g+1)$ of unordered partitions of $g+1$ into $r+1$ positive integers and
\begin{equation}
M=\sum_{r=0}^{g-1}{N(r)}
\end{equation}  
The general classification of exceptional spin-graps on a surface of genus $g$ will be given. We will illustrate this by giving  explicite examples of all classes of isomorphic exceptional graphs for $g$ equal to $2,3,4$ and $5$.  There are exactly two classes of isomorphic exceptional spin graphs on a surface of genus $2$ and hence exactly three different types of surfaces of genus $2$. There are four isomorphic classes of possible exceptional graphs on a surfaces of genus $3$ and hence, by the condition $(1.1)$, there are exactly nine different types of surfaces of genus $3$. 

When $g=4$  the number of isomorphic classes of exceptional spin graphs is six. Now, by the condition $(4.1)$, each possible $6$-tuple $(m_{0},m_{1,1},m_{1,2},m_{2,1},m_{2,2},m_{3})$ that satisfies  $16=8m_{0}+6(m_{1,1}+m_{1,2})+4(m_{2,1}+m_{2,2})+2m_{3}$ determines a concrete type of a hyperelliptic Riemann surface equipped with a nonsingular spin structure $\xi_{\epsilon}$.

The number $M=\sum_{r=0}^{g-1}(N(r))$ is the same for all surfaces of genus $g$ and for all possible nonsingular even spin-structures. It is an open question whether, for a given surface $\Sigma$, the $M$-tuples $(m_{r,s})$, whith $r=0,\ldots,{g-1}; s=0,\ldots,{N(r)}$; are the same or they are different for different non-singular even characteristics $[\epsilon]'s$.

More subtle classification of hyperelliptic Riemann surfaces which uses not only exceptional spin graphs but also exceptional spin groups associated to each  vertex $Q$ of an exceptional spin-graph $\mathcal{S}_{P}^{\epsilon}$ is given in the paper [2].

\section{PRELIMINARIES}
Let $\Sigma$ be a compact, hyperelliptic Riemann surface of genus $g\geq{2}$ and let $\xi_{\epsilon}$ be a nonsingular even spin bundle over $\Sigma$. For any point $P\in{\Sigma}$ there exists (see  ~\cite{DV11}, ~\cite{RG67} ) a unique point-like representation of $\xi_{\epsilon}$, namely:

\begin{equation}
\xi_{\epsilon}\cong{{{\xi_{P}}^{-1}}\otimes{\xi_{P_{1}}}\otimes{\ldots}\otimes{\xi_{P_{g}}}};
\qquad         {P}\notin\{{P}_{1},{P}_{2},\ldots{P}_{g}\}
\end{equation}
with the property that the index of specialty of the divisor
\begin{equation}
{\mathcal{A}_{P}}^{\epsilon}:={P_{1}P_{2}\ldots{P_{g}}}
\end{equation}
vanishes. (Here the points ${P_{1}},\ldots,{P_{g}}$ are not necessarily distinct.) Equivalently,  there exists unique (up to a nonzero multiplicative constant) section $\sigma_{P}$ of the line bundle $\xi_{\epsilon}$ with a single, simple pole at $P$ and hence  with the  divisor given by 
 $(\sigma_{P}):=div\sigma_{P}=P^{-1}{\mathcal{A}_{P}}^{\epsilon}$.
 
 Let $\mathcal{D}$ be any integral divisor on $\Sigma$. By   $\{\mathcal{D}\}$ we will denote the set of all distict points of the divisor $\mathcal{D}$. Moreover, for any divisor $\mathcal{U}$ on a hyperelliptic surface $\Sigma$   the divisor that is obtained from $\mathcal{U}$ by replacing all of its points by their conjugate respectively, will be denoted by $\widetilde{\mathcal{U}}$. 
 
  Suppose that we have fixed a point $P\in{\Sigma}$. Let ${P_{i}}\in\{{\mathcal{A}_{P}}^{\epsilon}\}$. Now, for each $i=1,\ldots,g$, the point $P_{i}$ determines a new set of points $P_{i,j}$, $j=1,\ldots,g$, which form the divisor $\mathcal{A}_{P_{i}}^{\epsilon}$ respectively.  We will continue this process for all such points obtained in the previous step.
 \newtheorem{definition}{Definition} 
 \begin{definition} 
 The set of all points of $\Sigma$ containing  a point $P$ and obtained in the way described above starting from  the set $\{\mathcal{A}_{P}^{\epsilon}\}$ will be denoted by $\{\mathcal{S}_{P}^{\epsilon}\}$.
 \end{definition}

 \begin{definition}
 For any point $Q\in{\Sigma}$ the cardinality of the set $\{\mathcal{A}_{Q}^{\epsilon}\}$ will be denoted by  $ deg_{\epsilon}{Q}$ and will be called the $\epsilon$-degree of the point $Q$.
 \end{definition}
 \newtheorem{lemma}{Lemma}
 \begin{lemma}\hfill
 Let $P$ be any point of a hyperelliptic Riemann surface $\Sigma$. Then
 \begin{enumerate}
 \item ${\mathcal{A}_{\widetilde{P}}^{\epsilon}}={\widetilde{\mathcal{A}_{P}^{\epsilon}}}$
 \item For any $P_{i}\in\{\mathcal{A}_{P}^{\epsilon}\}$ we have $P\in{\{\mathcal{A}_{P_{i}}^{\epsilon}\}}$
 \end{enumerate}
 \end{lemma}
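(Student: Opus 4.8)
The plan is to deduce both assertions from one structural fact, namely that $\xi_\epsilon$ is invariant under the hyperelliptic involution, combined with the uniqueness built into the point-like representation $(2.1)$--$(2.2)$. Throughout I write $\tau$ for the hyperelliptic involution, so $\tau(P)=\widetilde P$, and $K$ for the canonical bundle; a spin bundle satisfies $\xi_\epsilon^{\otimes 2}\cong K$, whence $\deg\xi_\epsilon=g-1$ and $K\otimes\xi_\epsilon^{-1}\cong\xi_\epsilon$. Because $\xi_\epsilon$ is nonsingular, $h^0(\xi_\epsilon)=0$, and Riemann--Roch then gives $h^0(\xi_\epsilon\otimes\mathcal{O}(P))=1$; thus $\mathcal{A}_P^\epsilon$ is the \emph{unique} effective divisor in the class $\xi_\epsilon\otimes\mathcal{O}(P)$, which is precisely the content of $(2.1)$. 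I will invoke this uniqueness in both parts.

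For part (1) the decisive step is to show $\tau^*\xi_\epsilon\cong\xi_\epsilon$. On a hyperelliptic surface $\tau^*$ acts by $-1$ on every holomorphic differential (in the model $y^2=\prod_j(x-e_j)$ one has $\tau^*(x^k\,dx/y)=-x^k\,dx/y$), so $\tau$ acts as $-\mathrm{id}$ on $\mathrm{Jac}\,\Sigma$ and hence trivially on the $2$-torsion subgroup. Since the theta characteristics form a torsor under this subgroup, $\tau^*$ acts on them by a single translation; but $\tau$ fixes every Weierstrass point $w$, and $(g-1)w$ is a theta characteristic (because $2w\sim h$ and $K\sim(g-1)h$, where $h$ denotes the class of $P+\widetilde P$), so this translation has a fixed point and is therefore trivial. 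Consequently $\tau^*$ fixes every theta characteristic, in particular $\tau^*\xi_\epsilon\cong\xi_\epsilon$. Applying $\tau^*$ (i.e.\ conjugation $\widetilde{\,\cdot\,}$) to $\xi_\epsilon\cong\mathcal{O}(\mathcal{A}_P^\epsilon-P)$ yields $\xi_\epsilon\cong\mathcal{O}(\widetilde{\mathcal{A}_P^\epsilon}-\widetilde P)$. I then check that $\widetilde{\mathcal{A}_P^\epsilon}$ meets the three conditions defining $\mathcal{A}_{\widetilde P}^\epsilon$: it is effective of degree $g$; since $\tau$ is a bijection and $P\notin\{\mathcal{A}_P^\epsilon\}$, also $\widetilde P\notin\{\widetilde{\mathcal{A}_P^\epsilon}\}$; and its index of specialty equals that of $\mathcal{A}_P^\epsilon$ (apply $\tau^*$ and $\tau^*K\cong K$), hence vanishes. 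By uniqueness, $\mathcal{A}_{\widetilde P}^\epsilon=\widetilde{\mathcal{A}_P^\epsilon}$.

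For part (2) no further geometry is needed. Fix $P_i\in\{\mathcal{A}_P^\epsilon\}$; then $\mathcal{A}_P^\epsilon-P_i$ is effective and lies in the class $\xi_\epsilon\otimes\mathcal{O}(P-P_i)$, so $h^0(\xi_\epsilon\otimes\mathcal{O}(P-P_i))\ge 1$. Since this bundle has degree $g-1$, Riemann--Roch gives $h^0=h^1$, and Serre duality together with $K\otimes\xi_\epsilon^{-1}\cong\xi_\epsilon$ identifies $h^1(\xi_\epsilon\otimes\mathcal{O}(P-P_i))$ with $h^0(\xi_\epsilon\otimes\mathcal{O}(P_i-P))$, so that
\[
h^0\!\left(\xi_\epsilon\otimes\mathcal{O}(P_i-P)\right)=h^0\!\left(\xi_\epsilon\otimes\mathcal{O}(P-P_i)\right)\ge 1 .
\]
Hence there is an effective divisor $E$ with $\mathcal{O}(E)\cong\xi_\epsilon\otimes\mathcal{O}(P_i-P)$; then $P+E$ is effective of degree $g$ in the class $\xi_\epsilon\otimes\mathcal{O}(P_i)$, so by uniqueness $P+E=\mathcal{A}_{P_i}^\epsilon$ and therefore $P\in\{\mathcal{A}_{P_i}^\epsilon\}$.

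The bookkeeping in part (2), and the final identification in part (1), are routine; the single substantive point---the step I expect to demand the most care---is the involution-invariance $\tau^*\xi_\epsilon\cong\xi_\epsilon$ used in part (1). Everything else rests only on the self-duality $\xi_\epsilon^{\otimes 2}\cong K$, Riemann--Roch and Serre duality, and the uniqueness clause of the point-like representation.
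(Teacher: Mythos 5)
Your proof is correct, but it takes a genuinely different route from the paper's (very terse) argument. For part (1) the paper simply multiplies the two sections $\sigma_{P}$ and $\sigma_{\widetilde{P}}$ to obtain a meromorphic differential, i.e.\ a section of $\xi_{\epsilon}^{\otimes 2}=K$ with divisor $\mathcal{A}_{P}^{\epsilon}\mathcal{A}_{\widetilde{P}}^{\epsilon}/(P\widetilde{P})$, and reads the claim off from the behaviour of differentials under the hyperelliptic involution; you instead establish the key fact $\tau^{*}\xi_{\epsilon}\cong\xi_{\epsilon}$ head-on, via the action of $\tau$ on theta characteristics and the fixed characteristic $(g-1)w$ at a Weierstrass point, and then invoke uniqueness of the point-like representation. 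Your version is more self-contained and actually supplies the step the paper leaves implicit: $\tau$-invariance of the product divisor alone only gives invariance of $\mathcal{A}_{P}^{\epsilon}\mathcal{A}_{\widetilde{P}}^{\epsilon}$ as a whole, and it is the invariance of the spin class under $\tau$ together with uniqueness that upgrades this to the equality $\mathcal{A}_{\widetilde{P}}^{\epsilon}=\widetilde{\mathcal{A}_{P}^{\epsilon}}$. For part (2) the paper again multiplies sections: $\sigma_{P}\sigma_{P_{i}}$ is a meromorphic differential whose divisor is $\geq -P$ because the zero of $\sigma_{P}$ at $P_{i}$ cancels the pole of $\sigma_{P_{i}}$, and a meromorphic differential with exactly one simple pole is forbidden by the residue theorem, so $P$ must occur in $\mathcal{A}_{P_{i}}^{\epsilon}$. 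Your Riemann--Roch/Serre-duality computation of $h^{0}(\xi_{\epsilon}\otimes\mathcal{O}(P_{i}-P))$ is the cohomological shadow of that same residue pairing and is equally valid; the paper's version is shorter and more concrete, while yours makes the symmetry $h^{0}(\xi_{\epsilon}+P-P_{i})=h^{0}(\xi_{\epsilon}+P_{i}-P)$ explicit and keeps careful track of the uniqueness and index-of-specialty conditions.
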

 \begin{proof}
 The property $(1)$ follows immediately when we  consider a meromorphic differential given by the product of two sections $\sigma_{P}$ and $\sigma_{\widetilde{P}}$. Analogously, by considering a section $\sigma_{P}\sigma_{P_{i}}$ of the canonical bundle ${\xi}_{\epsilon}^{\otimes{2}}$ we obtain the property $(2)$.
 \end{proof}
 
 \begin{definition}(~\cite{RG67},  ~\cite{RK95}, ~\cite{FK92})
 Let $\mathcal{D}_{k}^{P}$ denote the integral divisor that is obtained from the divisor $\mathcal{A}_{P}^{\epsilon}$ by deleting the point ${P_{k}}\in\{\mathcal{A}_{P}^{\epsilon}\}$. The unique integral divisor that is complementary to $\mathcal{D}_{k}^{P}$ will be denoted by $\widehat{\mathcal{D}_{k}^{P}}$ (i.e. the product  $\mathcal{D}_{k}^{P}{\widehat{\mathcal{D}_{k}^{P}}}$ is a canonical divisor).  
 \end{definition}
 
 \newtheorem{corollary}{Corollary}
 \begin{corollary}\hfill
 \begin{enumerate}
 
 \item When a point $P\in{\Sigma}$ has $\epsilon$-degree  equal to $g$ then we have ${\widehat{\mathcal{D}_{k}^{P}}}={\widetilde{\mathcal{D}_{k}^{P}}}$.
 \item When $deg_{\epsilon}{P}=g$ then the divisors 
 \begin{equation*}
 {P^{-1}{\mathcal{A}_{P}^{\epsilon}}}={{P^{-1}}{P_{1}}\ldots{P_{g}}} \qquad and \qquad {P_{k}^{-1}}{P}{\widetilde{P}_{1}}\ldots{\widehat{\widetilde{P_{k}}}}\ldots{\widetilde{P}_{g}} 
 \end{equation*}
 are equivalent to each other. (Here  the hat means that the point $\widetilde{P}_{k}$ is omitted.)
 \end{enumerate}
 \end{corollary}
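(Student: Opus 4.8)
The plan is to extract the two identities directly from the defining data of $\xi_{\epsilon}$ and to reduce everything to the elementary hyperelliptic fact that a product of $g-1$ conjugate pairs is a canonical divisor. Throughout I write $\sim$ for linear equivalence of integral divisors, and I abbreviate ``$\mathcal{D}\sim\xi_{\epsilon}$'' for ``$\mathcal{D}$ lies in the divisor class of $\xi_{\epsilon}$.'' From the section $\sigma_{P}$ and the spin relation $\xi_{\epsilon}^{\otimes 2}\cong$ (canonical bundle) I will use the two structural equivalences
\begin{equation*}
P^{-1}\mathcal{A}_{P}^{\epsilon}\sim\xi_{\epsilon}, \qquad \mathcal{D}_{k}^{P}\,\widehat{\mathcal{D}_{k}^{P}}\sim\xi_{\epsilon}^{\otimes 2}.
\end{equation*}
I would establish part $(1)$ first and then obtain part $(2)$ by a one-line computation in the divisor class group.

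For part $(1)$ I would exploit the hypothesis $\deg_{\epsilon}P=g$ together with the vanishing index of specialty of $\mathcal{A}_{P}^{\epsilon}$. The former forces the $g$ points $P_{1},\dots,P_{g}$ to be distinct, and the latter gives $\ell(\mathcal{A}_{P}^{\epsilon})=1$ by Riemann--Roch, so that no nonconstant function has polar divisor bounded by $\mathcal{A}_{P}^{\epsilon}$. The moment a conjugate pair occurred inside $\mathcal{A}_{P}^{\epsilon}$ (i.e.\ $P_{j}=\widetilde{P}_{i}$ for some $j\neq i$), the hyperelliptic projection $x-x(P_{i})$ would be exactly such a function; hence no two of the $P_{i}$ are conjugate. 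Consequently the divisor $\mathcal{D}_{k}^{P}\widetilde{\mathcal{D}_{k}^{P}}=\prod_{i\neq k}P_{i}\widetilde{P}_{i}$ is a union of $g-1$ distinct conjugate pairs, each equivalent to the hyperelliptic pencil, so their product is a canonical divisor. Thus $\widetilde{\mathcal{D}_{k}^{P}}$ is complementary to $\mathcal{D}_{k}^{P}$, and the asserted equality $\widehat{\mathcal{D}_{k}^{P}}=\widetilde{\mathcal{D}_{k}^{P}}$ follows once I check that the complementary divisor is unique.

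For part $(2)$ I would simply compute classes. Writing $\mathcal{A}_{P}^{\epsilon}=P_{k}\,\mathcal{D}_{k}^{P}$ and using the first identity in the form $\mathcal{A}_{P}^{\epsilon}\sim P\,\xi_{\epsilon}$ gives $\mathcal{D}_{k}^{P}\sim\xi_{\epsilon}\,P\,P_{k}^{-1}$; feeding this into the complementarity relation $\mathcal{D}_{k}^{P}\widehat{\mathcal{D}_{k}^{P}}\sim\xi_{\epsilon}^{\otimes 2}$ yields $\widehat{\mathcal{D}_{k}^{P}}\sim\xi_{\epsilon}\,P^{-1}\,P_{k}$. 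Substituting and invoking part $(1)$,
\begin{equation*}
P_{k}^{-1}P\,\widetilde{P}_{1}\cdots\widehat{\widetilde{P_{k}}}\cdots\widetilde{P}_{g}=P_{k}^{-1}P\,\widehat{\mathcal{D}_{k}^{P}}\sim P_{k}^{-1}P\cdot\xi_{\epsilon}\,P^{-1}P_{k}=\xi_{\epsilon}\sim P^{-1}\mathcal{A}_{P}^{\epsilon},
\end{equation*}
which is precisely the claimed equivalence.

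The main obstacle is the uniqueness invoked at the end of part $(1)$, equivalently the assertion $\dim\lvert\widetilde{\mathcal{D}_{k}^{P}}\rvert=0$. I expect to dispose of it by the classification of special linear systems on a hyperelliptic surface: any effective divisor of degree $\le g$ with positive-dimensional complete linear system is linearly equivalent to a sum of copies of the hyperelliptic pencil plus base points, so \emph{every} member of that system, and in particular the divisor itself, must contain a full fiber of the double cover (a conjugate pair or a doubled Weierstrass point). Since $\widetilde{\mathcal{D}_{k}^{P}}=\prod_{i\neq k}\widetilde{P}_{i}$ consists of distinct, pairwise non-conjugate points, it contains no such fiber, so its linear system is a single point. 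This is the one place where the hyperelliptic hypothesis and the nondegeneracy $\deg_{\epsilon}P=g$ are used in an essential way rather than through formal divisor bookkeeping.
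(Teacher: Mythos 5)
Your argument is correct. The paper states this corollary without proof, so there is nothing to compare line by line, but your route is the natural one and is consistent with the machinery the paper sets up around Lemma 1: part (1) reduces to the hyperelliptic fact that $\mathcal{D}_k^P\widetilde{\mathcal{D}_k^P}=\prod_{i\neq k}P_i\widetilde{P}_i$, being a product of $g-1$ fibers of the double cover, is a canonical divisor, and part (2) is then pure bookkeeping in the divisor class group using $P^{-1}\mathcal{A}_P^{\epsilon}\in\xi_{\epsilon}$ and $\xi_{\epsilon}^{\otimes 2}\cong K$. Two minor remarks. The function witnessing a conjugate pair $P_j=\widetilde{P}_i$ inside $\mathcal{A}_P^{\epsilon}$ should be $1/(x-x(P_i))$ rather than $x-x(P_i)$, whose poles lie over $x=\infty$; this is cosmetic. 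More substantively, the uniqueness of the complementary divisor --- the one genuine issue you isolate at the end --- does not require the classification of special linear series on hyperelliptic curves: since $\mathcal{D}_k^P\leq\mathcal{A}_P^{\epsilon}$ and $i(\mathcal{A}_P^{\epsilon})=0$, one has $L(\mathcal{D}_k^P)\subseteq L(\mathcal{A}_P^{\epsilon})=\mathbb{C}$, hence $\ell(\mathcal{D}_k^P)=1$ and, by Riemann--Roch, $i(\mathcal{D}_k^P)=1$, so the integral divisor complementary to $\mathcal{D}_k^P$ is unique; this is exactly the uniqueness already presupposed by the paper's Definition 3. Your heavier argument via the structure of special pencils is also valid, and it has the merit of making explicit that the hypotheses force the $P_i$ to be pairwise non-conjugate (without which $\widetilde{\mathcal{D}_k^P}$ would move in a pencil and the stated equality would be ill-posed); but the shorter argument shows the corollary is an essentially immediate consequence of $i(\mathcal{A}_P^{\epsilon})=0$ together with the conjugate-pair computation.
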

 
 \begin{lemma}\hfill
 Suppose that  point $P\in{\Sigma}$ has $\epsilon$-degree equal to $g$.
 \begin{enumerate}
 \item When ${\widetilde{P}}\notin{\{\mathcal{A}_{P}^{\epsilon}\}}$ then  each point ${Q}\in{\{\mathcal{S}_{P}^{\epsilon}\}}$  has  $\epsilon$-degree equal to $g$ and we must have ${Q}\notin{\{\mathcal{A}_{\widetilde{Q}}^{\epsilon}\}}$.
\item When ${\widetilde{P}}\in{\{{\mathcal{A}_{P}^{\epsilon}}\}}$ then there exists ${Q}\in{\{\mathcal{S}_{P}^{\epsilon}\}}$ with $\epsilon$-degree necessarily less than $g$;  ${deg_{\epsilon}{Q}}<{g}$.
 \end{enumerate}
 \end{lemma}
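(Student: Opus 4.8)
The plan is to read off from Corollary 1 an explicit description of $\mathcal{A}_{P_{k}}^{\epsilon}$ for the neighbours $P_{k}\in\{\mathcal{A}_{P}^{\epsilon}\}$, and then to propagate the relevant property along the recursive construction of $\{\mathcal{S}_{P}^{\epsilon}\}$. First I would record a reformulation of the ``conjugate'' condition. By Lemma 1(1) we have $\mathcal{A}_{\widetilde{Q}}^{\epsilon}=\widetilde{\mathcal{A}_{Q}^{\epsilon}}$, so $\{\mathcal{A}_{\widetilde{Q}}^{\epsilon}\}$ is exactly the set of conjugates of the points of $\{\mathcal{A}_{Q}^{\epsilon}\}$; consequently $Q\in\{\mathcal{A}_{\widetilde{Q}}^{\epsilon}\}$ if and only if $\widetilde{Q}\in\{\mathcal{A}_{Q}^{\epsilon}\}$. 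In particular the hypotheses of (1) and (2) are the two mutually exclusive cases $\widetilde{P}\notin\{\mathcal{A}_{P}^{\epsilon}\}$ and $\widetilde{P}\in\{\mathcal{A}_{P}^{\epsilon}\}$, and the conclusion $Q\notin\{\mathcal{A}_{\widetilde{Q}}^{\epsilon}\}$ asked for in (1) is the same statement as $\widetilde{Q}\notin\{\mathcal{A}_{Q}^{\epsilon}\}$.

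The key step is the formula
\begin{equation*}
\mathcal{A}_{P_{k}}^{\epsilon}=P\cdot\prod_{j\neq k}\widetilde{P}_{j},\qquad\text{where }\ \mathcal{A}_{P}^{\epsilon}=P_{1}\cdots P_{g},\ \ \deg_{\epsilon}P=g.
\end{equation*}
Corollary 1(2) asserts that the divisor $P_{k}^{-1}\,P\,\widetilde{P}_{1}\cdots\widehat{\widetilde{P}_{k}}\cdots\widetilde{P}_{g}$ is equivalent to $\xi_{\epsilon}$; its numerator is integral of degree $g$ and it has a simple pole at $P_{k}$. Since $\xi_{\epsilon}$ is a nonsingular even spin bundle it has no nonzero holomorphic section ($h^{0}(\xi_{\epsilon})=0$), so $P_{k}$ cannot cancel against the numerator (a cancellation would exhibit an effective divisor of degree $g-1$ equivalent to $\xi_{\epsilon}$). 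Hence the pole is genuinely simple and the uniqueness of the point-like representation $(2.1)$ identifies this numerator with $\mathcal{A}_{P_{k}}^{\epsilon}$.

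Granting this formula, part (1) follows by induction over the generations of $\{\mathcal{S}_{P}^{\epsilon}\}$. Assume $\deg_{\epsilon}P=g$ and $\widetilde{P}\notin\{\mathcal{A}_{P}^{\epsilon}\}$. For any $P_{k}$ the set $\{P,\widetilde{P}_{1},\ldots,\widehat{\widetilde{P}_{k}},\ldots,\widetilde{P}_{g}\}$ has exactly $g$ distinct elements: the $\widetilde{P}_{j}$ with $j\neq k$ are distinct because the $P_{j}$ are, and $P$ differs from each $\widetilde{P}_{j}$ because $\widetilde{P}\neq P_{j}$. Thus $\deg_{\epsilon}P_{k}=g$; moreover $\widetilde{P}_{k}$ equals neither $P$ (otherwise $P_{k}=\widetilde{P}\in\{\mathcal{A}_{P}^{\epsilon}\}$) nor any $\widetilde{P}_{j}$ with $j\neq k$, so $\widetilde{P}_{k}\notin\{\mathcal{A}_{P_{k}}^{\epsilon}\}$. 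Each neighbour therefore again satisfies the hypotheses of (1), and since every vertex of $\{\mathcal{S}_{P}^{\epsilon}\}$ is reached from $P$ in finitely many such steps, the property $\deg_{\epsilon}Q=g$ together with $Q\notin\{\mathcal{A}_{\widetilde{Q}}^{\epsilon}\}$ holds for all $Q\in\{\mathcal{S}_{P}^{\epsilon}\}$.

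For part (2), suppose $\widetilde{P}=P_{m}\in\{\mathcal{A}_{P}^{\epsilon}\}$, that is $P=\widetilde{P}_{m}$. Choosing an index $k\neq m$ (possible since $g\geq 2$), the formula gives $\mathcal{A}_{P_{k}}^{\epsilon}=\widetilde{P}_{m}\cdot\prod_{j\neq k}\widetilde{P}_{j}$, in which the factor $\widetilde{P}_{m}$ occurs a second time inside the product because $m\neq k$. Hence $\{\mathcal{A}_{P_{k}}^{\epsilon}\}$ has at most $g-1$ distinct points, so $\deg_{\epsilon}P_{k}<g$, and $Q=P_{k}\in\{\mathcal{A}_{P}^{\epsilon}\}\subseteq\{\mathcal{S}_{P}^{\epsilon}\}$ is the required vertex. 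The main obstacle in the whole argument is exactly the justification of the displayed formula: one must ensure that passing to the conjugate/complementary divisor in Corollary 1 does not produce a hidden cancellation at $P_{k}$, which would lower the degree and break the identification with $\mathcal{A}_{P_{k}}^{\epsilon}$; this is precisely where the nonsingular even hypothesis $h^{0}(\xi_{\epsilon})=0$ is indispensable. Everything after that is bookkeeping with the distinctness of the points $P_{1},\ldots,P_{g}$ and their conjugates.
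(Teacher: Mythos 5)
Your proof is correct: the key identity $\mathcal{A}_{P_{k}}^{\epsilon}=P\prod_{j\neq k}\widetilde{P}_{j}$ extracted from Corollary 1(2) (with cancellation at $P_{k}$ ruled out by $h^{0}(\xi_{\epsilon})=0$, which is equivalent to noting that $i(\mathcal{A}_{P}^{\epsilon})=0$ forbids a conjugate pair inside $\mathcal{A}_{P}^{\epsilon}$) is exactly what makes both parts work, and the induction over generations of $\{\mathcal{S}_{P}^{\epsilon}\}$ in part (1) and the repeated factor $\widetilde{P}_{m}=P$ forcing $\deg_{\epsilon}P_{k}<g$ in part (2) are handled correctly. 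The paper's own proof consists of the single word ``Simple,'' so there is nothing to contrast; your argument is plainly the intended one and supplies the details the paper omits.
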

 \begin{proof}
 Simple.
 \end{proof}
 \begin{corollary}
 When the $\epsilon$-degree of a non-Weierstrass point $P\in{\Sigma}$ is equal to $g$ and when $\widetilde{P}\notin{\{\mathcal{A}_{P}^{\epsilon}\}}$ then the cardinality of the set $\{\mathcal{S}_{P}^{\epsilon}\}$ is $2g+2$.  More precisely we have
 \begin{equation*}
 {\{\mathcal{S}_{P}^{\epsilon}\}}={\{P,P_{k},\widetilde{P},\widetilde{P}_{k};k=1\ldots{g}\}}
 \end{equation*}
 \end{corollary}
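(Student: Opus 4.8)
The plan is to first pin down the divisors $\mathcal{A}_Q^\epsilon$ for every $Q$ in the candidate set $S:=\{P,P_k,\widetilde P,\widetilde P_k;\,k=1,\ldots,g\}$, then to show that the iteration of Definition~1 neither leaves $S$ nor stops short of it, and finally to verify that the $2g+2$ listed points are pairwise distinct. I would start from four divisor identities. By definition $\mathcal{A}_P^\epsilon=P_1\cdots P_g$, and Lemma~1(1) gives at once $\mathcal{A}_{\widetilde P}^\epsilon=\widetilde P_1\cdots\widetilde P_g$. The key input is Corollary~1(2): the divisor $P_k^{-1}P\,\widetilde P_1\cdots\widehat{\widetilde P_k}\cdots\widetilde P_g$ is equivalent to $(\sigma_P)$, hence represents $\xi_\epsilon$ with a single simple pole at $P_k$; since $\deg_\epsilon P_k=g$ by Lemma~2(1), the uniqueness of the point-like representation forces $\mathcal{A}_{P_k}^\epsilon=P\,\widetilde P_1\cdots\widehat{\widetilde P_k}\cdots\widetilde P_g$. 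Conjugating this through Lemma~1(1) yields $\mathcal{A}_{\widetilde P_k}^\epsilon=\widetilde P\,P_1\cdots\widehat{P_k}\cdots P_g$.

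These four sets are all contained in $S$, so the process of Definition~1 never produces a point outside $S$, which gives $\{\mathcal{S}_P^\epsilon\}\subseteq S$. Conversely the iteration reaches every point of $S$: from $P$ one obtains each $P_k$; from $P_k$ one obtains $P$ together with all $\widetilde P_j$, $j\neq k$; and from any such $\widetilde P_j$ one obtains $\widetilde P$ together with the remaining $P_i$ (here $g\geq 2$ guarantees that an index $j\neq k$ is available). Hence $\{\mathcal{S}_P^\epsilon\}=S$.

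It remains to count. The points $P,P_1,\ldots,P_g$ are distinct because $P\notin\{\mathcal{A}_P^\epsilon\}$ and $\deg_\epsilon P=g$; applying conjugation, $\widetilde P,\widetilde P_1,\ldots,\widetilde P_g$ are distinct as well. Since $P$ is non-Weierstrass, $P\neq\widetilde P$, and the hypothesis $\widetilde P\notin\{\mathcal{A}_P^\epsilon\}$ separates $\{P,P_k\}$ from $\widetilde P$ and, by conjugation, $P$ from every $\widetilde P_k$. The only remaining point is to show $P_j\neq\widetilde P_k$ for all $j,k$. If $j\neq k$ and $P_j=\widetilde P_k$, then $\widetilde P_k$ (with $k\neq j$) already lies in $\mathcal{A}_{P_j}^\epsilon$, so $P_j\in\{\mathcal{A}_{P_j}^\epsilon\}$, contradicting the representation condition. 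If $j=k$ and $P_j=\widetilde P_j$, then $P_j$ is a Weierstrass point, so Lemma~1(1) makes $\mathcal{A}_{P_j}^\epsilon$ conjugation-invariant; but $P\in\{\mathcal{A}_{P_j}^\epsilon\}$ while $P\notin\{\widetilde{\mathcal{A}_{P_j}^\epsilon}\}=\{\widetilde P,P_1,\ldots,\widehat{P_j},\ldots,P_g\}$, a contradiction. Thus all $2g+2$ points are distinct.

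I expect the distinctness step $P_j\neq\widetilde P_k$ to be the main obstacle, since it is the only place where the explicit form of $\mathcal{A}_{P_k}^\epsilon$ and the conjugation symmetry of Lemma~1(1) must be combined; the closure and completeness steps are then routine bookkeeping with the four divisor identities.
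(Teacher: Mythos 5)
Your proof is correct, and it supplies in full the argument the paper omits: the corollary is stated with no proof at all, resting implicitly on exactly the ingredients you use (Corollary~1(2) to pin down $\mathcal{A}_{P_k}^{\epsilon}=P\widetilde{P}_{1}\ldots\widehat{\widetilde{P_{k}}}\ldots\widetilde{P}_{g}$, Lemma~1(1) for the conjugate divisors, and Lemma~2(1) for propagating the standardness hypotheses), so your route is the intended one. The only point worth tightening is a slight ordering issue: to invoke uniqueness of the point-like representation at $P_k$ you must first know that $P_k$ does not occur in the candidate divisor $P\widetilde{P}_{1}\ldots\widehat{\widetilde{P_{k}}}\ldots\widetilde{P}_{g}$, whereas you establish $P_j\neq\widetilde{P}_k$ afterwards using that identification; this is not a real gap, since any such coincidence would cancel the pole and make $P_k^{-1}P\widetilde{P}_{1}\ldots\widehat{\widetilde{P_{k}}}\ldots\widetilde{P}_{g}$ an integral divisor of a section of $\xi_{\epsilon}$, contradicting $h^{0}(\xi_{\epsilon})=0$ for a non-singular even spin structure, but that observation should precede the appeal to uniqueness.
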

 \begin{lemma}
 For any Weierstrass point $P\in{\mathcal{W}}\subset{\Sigma}$ we have $deg_{\epsilon}{P}=g$. Moreover,  all points $P_{k}\in{\{\mathcal{A}_{P}^{\epsilon}\}}$ are also the Weierstrass points.
 \end{lemma}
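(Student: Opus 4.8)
The plan is to derive both assertions at once from a single structural fact: for a Weierstrass point the divisor $\mathcal{A}_{W}^{\epsilon}$ is invariant under the hyperelliptic involution, and the vanishing of its index of specialty prevents it from containing any fibre of the $g^{1}_{2}$. First I would feed the hypothesis $W\in\mathcal{W}$, i.e. $\widetilde{W}=W$, into Lemma~1(1). The identity $\mathcal{A}_{\widetilde{P}}^{\epsilon}=\widetilde{\mathcal{A}_{P}^{\epsilon}}$ then specialises to $\mathcal{A}_{W}^{\epsilon}=\widetilde{\mathcal{A}_{W}^{\epsilon}}$, so the effective degree-$g$ divisor $\mathcal{A}_{W}^{\epsilon}$ is stable under conjugation. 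Decomposing it into orbits of the involution, every point either is a Weierstrass point (a fixed point, of some multiplicity $\leq$ its own) or occurs in a genuine conjugate pair $P\widetilde{P}$ with $P$ and $\widetilde{P}$ entering with equal multiplicity.

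Next I would exploit the defining property of $\mathcal{A}_{W}^{\epsilon}$, namely that its index of specialty is $0$. By Riemann--Roch this forces $h^{0}(\mathcal{A}_{W}^{\epsilon})=g-g+1+0=1$, so $\mathcal{A}_{W}^{\epsilon}$ is the \emph{only} effective divisor in its linear class. The point I would then isolate is that both orbit types that could create trouble represent the hyperelliptic class $\mathfrak{h}$ (the unique $g^{1}_{2}$): a conjugate pair $P\widetilde{P}$ is the fibre of the two-sheeted map $\Sigma\rightarrow\mathbb{P}^{1}$ over a non-branch point, and a doubled Weierstrass point $(W')^{2}$ is the fibre over a branch point. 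In either case such a sub-divisor $F$ satisfies $F\sim\mathfrak{h}$ and lies in the pencil $|\mathfrak{h}|$, which has dimension $1$.

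The crux is to turn this into a contradiction. Suppose $\mathcal{A}_{W}^{\epsilon}=F\,D'$ with $F\sim\mathfrak{h}$ and $D'$ effective of degree $g-2$. Since $\dim|\mathfrak{h}|=1$, the family $\{E\,D':E\in|\mathfrak{h}|\}$ is a positive-dimensional linear system sitting inside $|\mathcal{A}_{W}^{\epsilon}|$, whence $h^{0}(\mathcal{A}_{W}^{\epsilon})\geq 2$, contradicting uniqueness. Therefore $\mathcal{A}_{W}^{\epsilon}$ can contain neither a conjugate pair nor a repeated Weierstrass point; combined with the orbit decomposition of the preceding step it must be a product of $g$ \emph{distinct} Weierstrass points. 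This simultaneously gives $\deg_{\epsilon}W=g$ (the set $\{\mathcal{A}_{W}^{\epsilon}\}$ has exactly $g$ elements) and the claim that every $P_{k}\in\{\mathcal{A}_{W}^{\epsilon}\}$ belongs to $\mathcal{W}$.

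I expect the main obstacle to be exactly this crux step, i.e. converting ``$\mathcal{A}_{W}^{\epsilon}$ contains a fibre of the $g^{1}_{2}$'' into a genuine violation of the index-of-specialty hypothesis; the rest is bookkeeping with the involution. I would be careful to phrase the pencil argument so that it applies uniformly to the two admissible orbit types, since $P\widetilde{P}$ and $(W')^{2}$ are precisely the conjugation-invariant effective divisors of degree $2$, and both are members of $|\mathfrak{h}|$.
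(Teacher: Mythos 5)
Your proof is correct, and its skeleton matches the paper's: both start from Lemma~1(1) to get $\mathcal{A}_{W}^{\epsilon}=\widetilde{\mathcal{A}_{W}^{\epsilon}}$ for a Weierstrass point $W$ and then analyse the orbit structure of this invariant divisor. The difference lies in how the degenerate configurations are excluded. The paper passes directly from invariance to ``$\{\mathcal{A}_{W}^{\epsilon}\}\subset\mathcal{W}$'' without comment, and then rules out $\deg_{\epsilon}W<g$ by observing that a repeated point would force the divisor class of $\sigma_{W}$ to contain an integral divisor, contradicting the \emph{non-singularity of the even characteristic} $[\epsilon]$ (i.e. $h^{0}(\xi_{\epsilon})=0$). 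You instead run a single uniform argument off the defining condition $i(\mathcal{A}_{W}^{\epsilon})=0$: any conjugate pair or doubled Weierstrass point inside $\mathcal{A}_{W}^{\epsilon}$ is a fibre of the $g^{1}_{2}$, and moving that fibre in its pencil gives $h^{0}(\mathcal{A}_{W}^{\epsilon})\geq 2$, which Riemann--Roch converts into $i(\mathcal{A}_{W}^{\epsilon})\geq 1$, a contradiction. This buys two things: it makes explicit the step the paper leaves implicit (why invariance forces individual points to be Weierstrass points rather than members of conjugate pairs), and it derives the whole lemma from the index-of-specialty condition alone, without invoking the parity or non-singularity of $\xi_{\epsilon}$ at this stage. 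One small caveat: $P\widetilde{P}$ and $(W')^{2}$ are not ``precisely'' the conjugation-invariant effective divisors of degree $2$ --- a product $W_{1}W_{2}$ of two distinct Weierstrass points is also invariant, but it does not lie in $|\mathfrak{h}|$ and causes no trouble; what your orbit decomposition actually delivers, and all that the argument needs, is that a non-reduced or non-fixed orbit contributes a sub-divisor of one of those two types, both of which do lie in the hyperelliptic pencil.
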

 \begin{proof}
 Since $P={\widetilde{P}}$ we have ${\mathcal{A}_{P}^{\epsilon}}={\mathcal{A}_{\widetilde{P}}^{\epsilon}}={\widetilde{\mathcal{A}_{P}^{\epsilon}}}$ and hence the set $\{\mathcal{A}_{P}^{\epsilon}\}$ is contained in the set $\mathcal{W}$ of the Weierstrass poits of $\Sigma$. Now, since the $\epsilon$-degree of $P$ less than $g$ impies that the divisor of the section $\sigma_{P}$ is an integral divisor (what is impossible for a non-singular even characteristic $[\epsilon]$) we immediately obtain  $deg_{\epsilon}{P}=g={deg_{\epsilon}{P_{k}}}$ for each poit $P_{k}\in{\{\mathcal{A}_{P}^{\epsilon}\}}\subset{\mathcal{W}}$.
 \end{proof}
 
 \section{STANDARD  and WEIERSTRASS SPIN-GRAPHS}
 Let us fix a non-singular spin bundle $\xi_{\epsilon}$ on a hyperelliptic surface $\Sigma$. Let $Q$ be any point of the set ${\{\mathcal{S}_{P}^{\epsilon}\}}\subset{\Sigma}$ introduced by the definition $1$ above.(From now on the index $\epsilon$ may be omitted.) Notice that  we must have ${\{\mathcal{S}_{P}\}}={\{\mathcal{S}_{Q}\}}$. Moreover, lemma 1 implies that when $P'\in{\mathcal{A}_{Q}}$ for some point $P'\in{\{\mathcal{S}_{P}\}}$ then $Q\in{\{\mathcal{A}_{P'}\}}$.  We will say that points $P'$ and $Q$ are $\epsilon$-connected (or mutually connected by $\xi_{\epsilon}$).

 \begin{definition}
 Let $P$ be an arbitrary point of $\Sigma$. The spin graph $\mathcal{S}_{P}$ through $P$ has vertices given by all points of the set $\{\mathcal{S}_{P}\}$ and it has edges which connect only vertices  that are $\epsilon$-connected. Suppose that vertices $Q$ and $R$ of the graph $\mathcal{S}_{P}$ are $\epsilon$-connected.  When  $n_{1}\geq{1}$ is the maximal integer such that the divisor ${Q^{n_{1}}}<{\mathcal{A}_{R}}$ and when $n_{2}\geq{1}$ is the maximal integer such that $R^{n_{2}}<{\mathcal{A}_{Q}}$ and, for example, $n_{1}\leq{n_{2}={n_{1}+k}}$; $k\geq{0}$,  then the straight edge between $Q$ and $R$ has the multiplicity equal to $n_{1}$ and the spin-graph $\mathcal{S}_{P}$ has the additional, oriented arc edge from $R$ to $Q$ labelled by $k={n_{2}-n_{1}}$.
 \end{definition} 
 \begin{pspicture}(-5,-1.4)(5,4)
\psline[linewidth=0.3mm,showpoints=true]%
(-4,0)(-2.6,0)(-2,1)
\psline[linewidth=0.3mm, showpoints=true]%
(-4.7,1)(-4,2)(-2.6,2)
\psline[linewidth=0.3mm]%
(-4.7,1)(-4,0)
\psline[linewidth=0.3mm]%
(-2.6,2)(-2,1)
\rput(-3,-1){\rnode{A}{Pict.1a}}
\rput(-4.3,-0.1){\rnode{a}{$P_{1}$}}
\rput(-2.3,-0.1){\rnode{b}{$\widetilde{P_{2}}$}}
\rput(-5,0.9){\rnode{c}{$P$}}
\rput(-1.7,0.9){\rnode{d}{$\widetilde{P}$}}
\rput(-4.4,2){\rnode{e}{$P_{2}$}}
\rput(-2.2,2){\rnode{f}{$\widetilde{P_{1}}$}}
\psline[linewidth=0.3mm, showpoints=true]%
(1,0)(3,0)(4,1)
\psline[linewidth=0.3mm, showpoints=true]%
(1,2)(3,2)(4,3)
\psline[linewidth=0.3mm]%
(1,0)(1,2)
\psline[linewidth=0.3mm]%
(3,0)(3,2)
\psline[linewidth=0.3mm,showpoints=true, linestyle=dashed]%
(2,1)(2,3)
\psline[linewidth=0.3mm]%
(1,2)(2,3)
\psline[linewidth=0.3mm, linestyle=dashed]%
(1,0)(2,1)
\psline[linewidth=0.3mm]%
(4,1)(4,3)
\psline[linewidth=0.3mm, linestyle=dashed]%
(2,1)(4,1)
\psline[linewidth=0.3mm]%
(2,3)(4,3)
\rput(0.7,0.3){\rnode{g}{$P$}}
\rput(2.7,0.3){\rnode{h}{$P_{1}$}}
\rput(1.7,1.2){\rnode{i}{$P_{2}$}}
\rput(3.7,1.3){\rnode{j}{$\widetilde{P_{3}}$}}
\rput(2.7,2.3){\rnode{k}{$\widetilde{P_{2}}$}}
\rput(3.7,3.3){\rnode{l}{$\widetilde{P_{2}}$}}
\rput(0.7,2.3){\rnode{m}{$P_{3}$}}
\rput(1.7,3.3){\rnode{n}{$\widetilde{P_{1}}$}}

\rput(2.4,-1){\rnode{B}{Pict.1b}}
\end{pspicture}

 We see immediately that when a spin-graph $\mathcal{S}_{P}$ is given then, for each of its vertex $Q$, we can read the divisor of the section $\sigma_{Q}$ of the bundle $\xi_{\epsilon}$.
 \begin{definition}
 A non-Weierstrass point $P$  will be called a standard point  of $\Sigma$ when $deg_{\epsilon}{P}={g}$ and  ${\widetilde{P}}\notin{\{\mathcal{A}_{P}\}}$. 
 \end{definition}
  Lemma $2$ implies that all vertices of the spin-graph $\mathcal{S}_{P}$ through a standard point $P$ are also standard points of $\Sigma$ as well as that all edges of $\mathcal{S}_{P}$ must be simple straight edges.  The spin-graph through a standard point will be called a standard graph. 
  \begin{corollary}
  All standard spin-graphs on any hyperelliptic Riemann surface $\Sigma$ of genus $g$ equipped with any non-singular spin bundle $\xi_{\epsilon}$ are isomorphic to each other. In other words, the isomorphic class of standard spin-graphs depends only on the genus $g$ of a surface.
  \end{corollary}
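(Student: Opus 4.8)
The plan is to show that the adjacency pattern of the spin-graph $\mathcal{S}_{P}$ through a standard point $P$ is completely forced and refers to nothing but the integer $g$, so that the labelled correspondence of vertices is automatically a graph isomorphism. First I would fix a standard point $P\in\Sigma$ and record the combinatorial data already available. By Corollary 2 the vertex set is
\begin{equation*}
\{\mathcal{S}_{P}\}=\{P,P_{k},\widetilde{P},\widetilde{P}_{k}\ :\ k=1,\ldots,g\},
\end{equation*}
consisting of exactly $2g+2$ \emph{distinct} points; and by the remark following the definition of a standard point every edge is a simple straight edge, so $\mathcal{S}_{P}$ is an ordinary simple graph, with no multiplicities and no oriented arcs. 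Thus it remains only to decide, for each pair of vertices, whether they are $\epsilon$-connected, i.e. whether one lies in the $\mathcal{A}$-set of the other.

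The central step is to compute the divisors $\mathcal{A}_{Q}$ for every vertex $Q$. For the hub $P$ one has $\mathcal{A}_{P}=P_{1}\cdots P_{g}$ by definition. For a vertex $P_{k}$, Corollary 1(2) exhibits the integral divisor $P_{k}^{-1}P\,\widetilde{P}_{1}\cdots\widehat{\widetilde{P_{k}}}\cdots\widetilde{P}_{g}$, which has a single simple pole at $P_{k}$ and is equivalent to $(\sigma_{P})$; by uniqueness of the section of $\xi_{\epsilon}$ with a simple pole at $P_{k}$ this divisor must equal $(\sigma_{P_{k}})$, whence
\begin{equation*}
\mathcal{A}_{P_{k}}=P\,\widetilde{P}_{1}\cdots\widehat{\widetilde{P_{k}}}\cdots\widetilde{P}_{g}.
\end{equation*}
Applying the conjugation symmetry $\mathcal{A}_{\widetilde{Q}}=\widetilde{\mathcal{A}_{Q}}$ of Lemma 1(1) to $P$ and to each $P_{k}$ then gives $\mathcal{A}_{\widetilde{P}}=\widetilde{P}_{1}\cdots\widetilde{P}_{g}$ and $\mathcal{A}_{\widetilde{P}_{k}}=\widetilde{P}\,P_{1}\cdots\widehat{P_{k}}\cdots P_{g}$. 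Reading off the sets $\{\mathcal{A}_{Q}\}$, and using that all $2g+2$ points are distinct, I obtain the full adjacency list: $P$ is joined exactly to $P_{1},\ldots,P_{g}$; $\widetilde{P}$ exactly to $\widetilde{P}_{1},\ldots,\widetilde{P}_{g}$; and each $P_{k}$ is joined to $P$ and to every $\widetilde{P}_{j}$ with $j\neq k$ (symmetrically each $\widetilde{P}_{k}$ to $\widetilde{P}$ and to every $P_{j}$ with $j\neq k$). In particular $P$ and $\widetilde{P}$ are non-adjacent, the $P_{k}$ are pairwise non-adjacent, the $\widetilde{P}_{k}$ are pairwise non-adjacent, and $P_{k}$ is never joined to $\widetilde{P}_{k}$; every vertex has degree $g$.

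This description shows that, as an abstract graph, $\mathcal{S}_{P}$ is two hub vertices $P,\widetilde{P}$ attached to the two halves $\{P_{k}\}$ and $\{\widetilde{P}_{k}\}$ of a crown graph (the complete bipartite graph $K_{g,g}$ with a perfect matching deleted). Since this recipe mentions only $g$, and not $\Sigma$ or the chosen $\xi_{\epsilon}$, any two standard graphs — on the same surface, on different surfaces of the same genus, or for different nonsingular even characteristics — have identical adjacency patterns, and the bijection $P\mapsto P'$, $\widetilde{P}\mapsto\widetilde{P}'$, $P_{k}\mapsto P'_{k}$, $\widetilde{P}_{k}\mapsto\widetilde{P}'_{k}$ (for any chosen labellings) preserves adjacency, hence is a graph isomorphism. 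I expect the one genuinely delicate point to be the explicit identification of $\mathcal{A}_{P_{k}}$: one must justify passing from the bare equivalence of divisors in Corollary 1(2) to the \emph{equality} $(\sigma_{P_{k}})=P_{k}^{-1}P\,\widetilde{P}_{1}\cdots\widehat{\widetilde{P_{k}}}\cdots\widetilde{P}_{g}$ via uniqueness of the pole-normalised section, and one must invoke the distinctness of all $2g+2$ vertices (guaranteed because $P$ is standard) to rule out coincidences, since any coincidence would collapse a degree or create a spurious multiple edge and destroy the regularity. Once distinctness and this identification are secured, everything else is bookkeeping.
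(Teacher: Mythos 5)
Your proposal is correct and follows essentially the route the paper leaves implicit: the paper states this corollary without proof, relying on Lemma 2 (all vertices standard, all edges simple and straight), Corollary 2 (the $2g+2$ distinct vertices), and Corollary 1(2) (which pins down $\mathcal{A}_{P_{k}}=P\,\widetilde{P}_{1}\cdots\widehat{\widetilde{P_{k}}}\cdots\widetilde{P}_{g}$ via uniqueness of the pole-normalised section), exactly the ingredients you assemble. Your explicit adjacency description — two hubs attached to the halves of a crown graph — agrees with the paper's Pict.1a (hexagon, $g=2$) and Pict.1b (cube, $g=3$), so the only value you add is making the bookkeeping, and the needed distinctness of the $2g+2$ points, fully explicit.
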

The standard graphs for genus $g=2$ and $g=3$ are given by the Pict1a and Pict1b respectively.

 Let $P$ be a Weierstrass point. We already know that the $\epsilon$-degree of $P$ must be equal to $g$ and that all points of the divisor $\mathcal{A}_{P}$ are also  Weierstrass points.  Since we have the property that $P\in{\{\mathcal{A}_{Q}\}}$ if and only if $Q\in{\{\mathcal{A}_{P}\}}$ the graph $\mathcal{S}_{P}$ through  a  point $P$ has $g+1$ vertices (which all are  Weierstrass points) and all of them are mutually connected by straight, simple (i.e. with multiplicity equal to $1$) edges. The examples of such graphs for genus $g=2$ and for genus $g=3$  are given by the Pict.2a and by Pict.2b respectively.

\begin{pspicture}(-4.5,-1.5)(4.5,4.5)
\psline[linewidth=0.3mm, showpoints=true]
(-4,0)(-2,0)(-3,1.73)
\psline[linewidth=0.3mm]
(-4,0)(-3,1.73)
\rput(-3,-1.3){\rnode{A}{Pict.2a}}
\rput(-4.3,0){\rnode{a}{$P$}}
\rput(-1.7,0){\rnode{b}{$P_{1}$}}
\rput(-2.7,1.73){\rnode{c}{$P_{2}$}}

\psline[linewidth=0.3mm, showpoints=true]
(1,0)(4,0)(2,3.8)
\psline[linewidth=0.3mm]%
(1,0)(2,3.8)
\psline[linewidth=0.3mm, showpoints=true,linestyle=dashed]
(2,3.8)(2.5,1)
\psline[linewidth=0.3mm, linestyle=dashed]
(1,0)(2.5,1)
\psline[linewidth=0.3mm,linestyle=dashed]
(4,0)(2.5,1)
\rput(2.5,-1.3){\rnode{B}{Pict.2b}}
\rput(0.7,0){\rnode{d}{$P$}}
\rput(4.3,0){\rnode{e}{$P_{1}$}}
\rput(2.2,1.1){\rnode{f}{$P_{2}$}}
\rput(1.7,3.8){\rnode{g}{$P_{3}$}}

\end{pspicture}

 \section{EXCEPTIONAL SPIN-GRAPHS}
 \subsection{Generalities}
 Any non Weierstrass point $Q\in{\Sigma}$ that is not a standard point will be called an exceptional point of $\Sigma$.  It occurs that on any hyperelliptic Riemann surface we must have points whose $\epsilon$-degree is equal to $\rho<g$ (i.e. exceptional points). To show this let us consider the unique (up to a non zero multiplicative constant) meromorphic function $f_{P}$ which connects two sections $\sigma_{P}$ and $\sigma_{\widetilde{P}}$ of the line bundle $\xi_{\epsilon}$; here $P$ is an arbitrary, fixed standard point of $\Sigma$. Fom the relation ${\sigma_{\widetilde{P}}}={f_{P}}\sigma_{P}$ we see that the function $f_{P}$ has degree equal to $g+1$ and its divisor is
 \begin{equation*}
 (f_{P})=({\frac{\sigma_{\widetilde{P}}}{\sigma_{P}}})={\frac{P{\widetilde{P_{1}}}\ldots{\widetilde{P_{g}}}}{\widetilde{P}P_{1}\ldots{P_{g}}}}
 \end{equation*}
 We see immediately that the ramification number of the mapping $f_{P}:{\Sigma}\rightarrow{\widehat{\mathbb{C}}}$ at any standard or at any Weierstrass point of $\Sigma$ is equal to $1$. 
 
 Let $Q$ be any point of $\Sigma$.  Let $\mathbb{A}_{Q}$ denote the integral divisor $Q{\mathcal{A}_{\widetilde{Q}}}$ and let $\{\mathbb{A}_{Q}\}$ denote, as usually, the set of its distinct points.
 \begin{lemma}\hfill
 Let $P$be a standard point and let $f_{P}$ be the meromorphic function on $\Sigma$ introduced above. Then
 \begin{enumerate}
 \item For each point $Q\in{\Sigma}$ the function $f_{P}$ is constant on the set $\{\mathbb{A}_{Q}\}$. 
 \item If $Q$ is any standard point different than $P$ than functions $f_{P}$ and $f_{Q}$ are related by some Moebius transformation.
 \item There are points on $\Sigma$ whose $\epsilon$-degree is equal to $\rho<g$. There are at most $4g$ such points.
 \end{enumerate}
 \end{lemma}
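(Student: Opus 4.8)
The plan is to route all three parts through the single linear system $|H+\xi_{\epsilon}|$, where $H$ denotes the hyperelliptic pencil (the unique $\mathfrak{g}^{1}_{2}$, so that $Q+\widetilde{Q}\sim H$ for every $Q\in\Sigma$) and $\xi_{\epsilon}$ is regarded as a divisor class with $2\xi_{\epsilon}\sim K$, $K$ the canonical class. The first observation is that every $\mathbb{A}_{Q}$ lies in one fixed class: since the point-like representation gives $\mathcal{A}_{\widetilde{Q}}\sim\widetilde{Q}+\xi_{\epsilon}$, one has
\[
[\mathbb{A}_{Q}]=[Q]+[\mathcal{A}_{\widetilde{Q}}]=[Q]+[\widetilde{Q}]+[\xi_{\epsilon}]=[H]+[\xi_{\epsilon}],
\]
independently of $Q$. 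In particular the zero divisor $\mathbb{A}_{P}$ and the pole divisor $\mathbb{A}_{\widetilde{P}}$ of $f_{P}$ both represent $H+\xi_{\epsilon}$, so every fibre of $f_{P}$ does as well. The key quantitative step is then $\dim|H+\xi_{\epsilon}|=1$: by Riemann--Roch, $h^{0}(H+\xi_{\epsilon})=2+h^{0}(K-H-\xi_{\epsilon})=2+h^{0}(\xi_{\epsilon}-H)$, using $K-\xi_{\epsilon}\sim\xi_{\epsilon}$. If $\xi_{\epsilon}-H$ were effective, say $\xi_{\epsilon}\sim H+E$ with $E\ge 0$, then $h^{0}(\xi_{\epsilon})\ge h^{0}(H)=2$, contradicting the nonsingularity of the even spin bundle, $h^{0}(\xi_{\epsilon})=0$. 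Hence $h^{0}(\xi_{\epsilon}-H)=0$ and $|H+\xi_{\epsilon}|$ is a pencil; since $f_{P}\colon\Sigma\to\widehat{\mathbb{C}}$ is nonconstant of degree $g+1$ with all fibres in this class, its fibres sweep out the complete pencil.

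Parts $(1)$ and $(2)$ then follow quickly. For $(1)$, the divisor $\mathbb{A}_{Q}$ is an effective member of the complete pencil $|H+\xi_{\epsilon}|$, hence equals a fibre $(f_{P}-c)_{0}$ with $c=f_{P}(Q)$; thus $f_{P}$ takes the constant value $c$ on $\{\mathbb{A}_{Q}\}$. For $(2)$, if $Q$ is another standard point then $f_{Q}$ likewise has all of its fibres in $|H+\xi_{\epsilon}|$ and so induces the \emph{same} complete pencil as $f_{P}$; two morphisms to $\widehat{\mathbb{C}}$ inducing the same pencil differ by a change of basis of $H^{0}(H+\xi_{\epsilon})$, that is by an element of $\mathrm{PGL}_{2}$, which is exactly a Moebius transformation $f_{Q}=M\circ f_{P}$.

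For $(3)$ I would invoke Riemann--Hurwitz for $f_{P}$: with target genus $0$ and $\deg f_{P}=g+1$, the total ramification is $\sum_{T}(e_{T}-1)=2g-2+2(g+1)=4g$. Since this is positive, $f_{P}$ must ramify somewhere; being unramified at every standard and every Weierstrass point, a ramification point $T$ is exceptional and lies in a fibre $\mathbb{A}_{R}=R+\widetilde{R_{1}}+\cdots+\widetilde{R_{g}}$ in which it occurs with multiplicity $\ge 2$. This multiplicity forces either a repetition among the $R_{j}$, giving $\deg_{\epsilon}R<g$ directly, or $R=\widetilde{R_{j}}$, i.e. $\widetilde{R}\in\{\mathcal{A}_{R}\}$, whence the second part of Lemma~2 produces a point of $\epsilon$-degree $<g$; this proves existence. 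For the bound, the ramification divisor of $f_{P}$ has degree $4g$, so $f_{P}$ has at most $4g$ ramification points, and I would charge the exceptional points against this divisor to obtain the estimate.

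The main obstacle is precisely this last charging in $(3)$. Because $\mathbb{A}_{Q}$ is the full fibre divisor \emph{with multiplicities}, the relation $\mathcal{A}_{\widetilde{T}}=\mathbb{A}_{T}-T$ shows that $\deg_{\epsilon}T$ is governed by the number of distinct points in the fibre through $T$, so several points sharing one ramified fibre may simultaneously fail to have full $\epsilon$-degree. Making the assignment of the degree-deficient points to the degree-$4g$ ramification divisor injective---verifying that the local ramification accounts for all such points without overcounting across a shared fibre, and reconciling this with the hyperelliptic symmetry $f_{P}(\widetilde{Q})=\lambda/f_{P}(Q)$ that pairs each ramification point with its conjugate---is the delicate part of the argument, and is where I would concentrate the real work.
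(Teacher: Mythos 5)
Parts (1) and (2) of your proposal are correct and are, in substance, the paper's own argument. The paper compresses your pencil computation into the single remark that $i(\mathcal{A}_{P})=0$: by Riemann--Roch this is exactly your statement $h^{0}(\xi_{\epsilon}+Q)=1$ (via $h^{0}(\xi_{\epsilon})=0$), so the zero divisor of $f_{P}-z_{1}$, which lies in the class of $H+\xi_{\epsilon}$ and contains $Q$, must equal $Q\mathcal{A}_{\widetilde{Q}}=\mathbb{A}_{Q}$. For (2) the paper simply exhibits the Moebius transformation explicitly, $f_{Q}=c\,\frac{f_{P}-z_{1}}{f_{P}-z_{2}}$ with $z_{1}=f_{P}(Q)$, $z_{2}=f_{P}(\widetilde{Q})$, which is the same fact you derive from both maps inducing the complete pencil $|H+\xi_{\epsilon}|$.

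For (3), your existence argument (total branching $4g$ by Riemann--Hurwitz; standard and Weierstrass points unramified; a ramification point forces a repeated point in some $\mathcal{A}_{R}$, hence $\deg_{\epsilon}R<g$) is verbatim the paper's. The difficulty you flag about the bound is genuine, and you should know the paper does not resolve it either: its proof ends with the assertion ``there are at most $4g$ of them,'' having shown only that the ramification points of $f_{P}$ number at most $4g$ and that each produces \emph{some} point of deficient degree. The charging really is many-to-one in the bad direction: a fibre $\mathbb{A}_{Q}=Q^{2}\widetilde{Q}_{1}\cdots\widetilde{Q}_{g-1}$ carries branch number $1$, yet every $Q_{j}$ has $\mathcal{A}_{Q_{j}}=\mathbb{A}_{Q}-\widetilde{Q}_{j}\ni Q^{2}$ and therefore $\deg_{\epsilon}Q_{j}=g-1<g$, so one ramification point accounts for $g-1$ degree-deficient points (and $2(g-1)$ counting the conjugate fibre). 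What the count $B=4g$ does bound is the number of ramification points of $f_{P}$, equivalently the number of points occurring with multiplicity $>1$ in some $\mathcal{A}_{R}$; read that way your argument (and the paper's) is complete, but the literal bound of $4g$ on points of $\epsilon$-degree less than $g$ is proved neither in your proposal nor in the paper, and your instinct to concentrate the real work there is sound.
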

 \begin{proof}\hfill
 \begin{enumerate}
 \item Suppose that $f_{P}(Q)={z_{1}}\in{{\mathbb{C}}^{*}}$ (i.e.point $Q\notin{\{\mathcal{S}_{P}\}}$). Since the index of specialty $i(\mathcal{A}_{P})=0$ we see that the zero divisor of the function $f_{P}-{z_{1}}$ is $(f_{P}-{z_{1}})^{0}={Q\mathcal{A}_{\widetilde{Q}}}={\mathbb{A}_{Q}}$. This means that we have  $f_{P}(R)={z_{1}}=f_{P}(Q)$ for each point $R\in{\{\mathcal{A}_{\widetilde{Q}}\}}$.
 \item It is a simple consequence of the fact that whenever $Q$ is a  standard point  with $f_{P}(Q)={z_{1}}\neq{0}$ then $f_{P}(\widetilde{Q})={z_{2}}\in{\mathbb{C}^{*}}$ and $z_{1}\neq{z_{2}}$. Hence the divisor of the function $(\frac{f_{P}-{z_{1}}}{f_{P}-{z_{2}}})$ is equal to the divisor ${\frac{\mathbb{A}_{Q}}{\mathbb{A}_{\widetilde{Q}}}}=(f_{Q})$.
 \item By the Hurwitz-Riemann theorem the total branch number $B$ of the function $f_{P}$ is $B=4g$. Since neither the Weierstrass points nor standard points can be ramification points of $f_{P}$ we must have some other points which have  non-zero branching numbers. However, a non-zero branching number at a point $Q\in{\Sigma}$ means that this point occurs with the multiplicity $m>1$ in the divisor $\mathcal{A}_{R}$ for some point $R\in{\{\mathcal{A}_{Q}\}}$. In other words, the $\epsilon$-degree of $R$ must be smaller than $g$. So, on any hyperelliptic surface $\Sigma$ there are exceptional points and there are at most $4g$ of them.
 \end{enumerate}
 \end{proof}
 The spin-graph through an exceptional point will be called an exceptional graph. The number of such graphs is restricted by the value of the total branching number $B=4g$.  Contrary to the fact that all standard graphs belong to exactly  one isomorphic class of graphs (depending only on the genus $g$ of $\Sigma$)  and that the same is true for the Weierstrass spin graphs, the exceptional spin graphs may belong to distinct isomorphic classes.
 
 Suppose that on a surface $\Sigma$ of genus $g$ we have  some number $M$ of possible isomorphic classes of exceptional spin graphs. Let $B_{i}$,  $i={1,\ldots,N}$ denote the total branch number carried by the $i$-th class of such graphs.  This number  is uniquely determined by the multiplicities of edges occuring in the graph. Let $m_{i}$  denote the number of exceptional graphs on $\Sigma$ that belong to the $i$-th class. We observe that we may classify hyperelliptic Riemann surfaces of genus $g$ equipped with a non-singular even spin strusture $\xi_{\epsilon}$ by an ordered arrays of $M$ nonnegative integers $(m_{1},m_{2},\ldots,{m_{M}})$ which satisfy
 \begin{equation}
 4g={\sum}_{i=1}^{M}{m_{i}B_{i}}
 \end{equation}
From our general considerations above we may notice that for any point $Q\in{\Sigma}$ and for any vertex $R$ of the graph $\mathcal{S}_{Q}$ we have 
\begin{equation}
 \text {either} \quad {\mathbb{A}_{R}}={\mathbb{A}_{Q}} \quad  \text{or} \quad {\mathbb{A}_{R}}={\mathbb{A}_{\widetilde{Q}}}
\end{equation} 

\begin{lemma}
Suppose that the integral divisor $\mathcal{A}_{Q}$ corresponding to an exceptional point $Q\in{\Sigma}$ is given by
\begin{equation*}
  \mathcal{A}_{Q}={{\widetilde{Q}^{k_{0}-1}}Q_{1}^{k_{1}}\ldots{Q_{r}^{k_{r}}}}; \qquad k_{0}\geq{1} \qquad (k_{0}-1)+k_{1}+\ldots{+k_{r}}=g
 \end{equation*} 
     with $r<g$.  The total branch number carried by the spin graph $\mathcal{S}_{Q}$ is equal to $B(\mathcal{S}_{Q})={2(g-r)}$.
\end{lemma}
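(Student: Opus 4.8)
The plan is to read the branch number of $\mathcal{S}_Q$ as the total ramification of the degree-$(g+1)$ map $f_P\colon\Sigma\to\widehat{\mathbb{C}}$, for a fixed standard point $P$, over the fiber-values attached to the graph, and to compute those fibers explicitly from the prescribed shape of $\mathcal{A}_Q$.

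First I would invoke Lemma 4(1) together with (4.2): every vertex $R$ of $\mathcal{S}_Q$ lies on one of the two level sets $\{\mathbb{A}_Q\}$ or $\{\mathbb{A}_{\widetilde{Q}}\}$, on each of which $f_P$ is constant. Using $\mathcal{A}_{\widetilde{Q}}=\widetilde{\mathcal{A}_Q}$ from Lemma 1(1) and $\widetilde{\widetilde{Q}}=Q$, I would write the two fibers as effective divisors
\begin{equation*}
\mathbb{A}_Q=Q\,\mathcal{A}_{\widetilde{Q}}=Q^{k_0}\widetilde{Q_1}^{k_1}\cdots\widetilde{Q_r}^{k_r},\qquad
\mathbb{A}_{\widetilde{Q}}=\widetilde{Q}\,\mathcal{A}_Q=\widetilde{Q}^{k_0}Q_1^{k_1}\cdots Q_r^{k_r}.
\end{equation*}
Each has degree $k_0+k_1+\cdots+k_r=g+1$, matching $\deg f_P$, and is supported on exactly $r+1$ distinct points, since in the normal form the $Q_i$ are pairwise distinct and distinct from $\widetilde{Q}$.

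Next I would read off the local ramification. As $\mathbb{A}_Q=f_P^{-1}\big(f_P(Q)\big)$ is the zero divisor of $f_P-f_P(Q)$, the ramification index of $f_P$ at a point $R$ equals the multiplicity of $R$ in the fiber to which it belongs. Hence the branching contributed over the value $f_P(Q)$ is
\begin{equation*}
(k_0-1)+\sum_{i=1}^{r}(k_i-1)=\Big(k_0+\sum_{i=1}^{r}k_i\Big)-(r+1)=(g+1)-(r+1)=g-r ,
\end{equation*}
and an identical count gives $g-r$ over the value $f_P(\widetilde{Q})$.

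The only point requiring care is that these two fibers are genuinely distinct, so that the two contributions add rather than coincide. I would show $\mathbb{A}_Q\neq\mathbb{A}_{\widetilde{Q}}$ by locating $Q$: it sits in the support of $\mathbb{A}_Q$ with multiplicity $k_0\geq 1$, but it does not lie in $\mathrm{supp}\,\mathbb{A}_{\widetilde{Q}}=\{\widetilde{Q},Q_1,\ldots,Q_r\}$, because $Q\neq\widetilde{Q}$ ($Q$ being non-Weierstrass) and $Q\neq Q_i$ for every $i$ (since $Q\notin\{\mathcal{A}_Q\}$). Equivalently $f_P(Q)\neq f_P(\widetilde{Q})$. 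Since (4.2) guarantees that no vertex of $\mathcal{S}_Q$ lies over any value of $f_P$ other than these two, the graph carries exactly the ramification over $f_P(Q)$ and $f_P(\widetilde{Q})$, and summing the two equal contributions yields $B(\mathcal{S}_Q)=(g-r)+(g-r)=2(g-r)$. The sole, and rather mild, obstacle is this bookkeeping step, confirming that the branching ``carried by the graph'' is precisely the sum over its two fibers and is neither over- nor under-counted.
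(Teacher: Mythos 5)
Your proof is correct and follows essentially the same route as the paper: the paper likewise rewrites $\mathbb{A}_{Q}$ in the form $(4.3)$ and computes $B(\mathcal{S}_{Q})=2[(k_{0}-1)+(k_{1}-1)+\cdots+(k_{r}-1)]=2(g-r)$, the factor $2$ accounting for exactly the two fibers $\mathbb{A}_{Q}$ and $\mathbb{A}_{\widetilde{Q}}$ that you identify. You only make explicit what the paper leaves implicit, namely that these two fibers of $f_{P}$ are distinct and that, by $(4.2)$, every vertex of the graph lies in one of them.
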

\begin{proof}
The form of the divisor $\mathcal{A}_{Q}$ implies that the divisor $\mathbb{A}_{Q}:={Q\mathcal{A}_{\widetilde{Q}}}$ may be written as follows
\begin{equation}
{\mathbb{A}_{Q}}=Q^{k_{0}}{\widetilde{Q}_{1}^{k_{1}}}{\widetilde{Q}_{2}^{k_{2}}}\ldots{\widetilde{Q}_{r}^{k_{r}}} \qquad with \qquad k_{0}+k_{1}+k_{2}+\ldots{+k_{r}}=g+1
\end{equation}
 The relation (4.2) implies that for each vertex $R\in{\{\mathcal{S}_{Q}\}}$ the divisor $\mathbb{A}_{R}$  (which has  the same form as  (4.3))  may have  distinct values of $k_{i}; i=0,1,\ldots,r$,  but the number $r$ of the   remaining points ( different than $R$)  in the set  $\{\mathbb{A}_{R}\}$ is  exactly the same   for  every vertex $R$. This means that we may characterize any exceptional spin graph $\mathcal{S}_{P}$ by an integer $r<g$. Besides, we see that  to evaluate the total branch number $B(\mathcal{S}_{Q})$ we may use an arbitrary vertex of the graph $\mathcal{S}_{Q}$ . More precisely we have 
\begin{equation}
B(\mathcal{S}_{Q})=2[(k_{0}-1)+(k_{1}-1)+\ldots{+(k_{r}-1)}]=2(g-r)
\end{equation}
as expected.
\end{proof}

\begin{definition}
The integer $r<g$ introduced for any exceptional spin graph $\mathcal{S}_{P}$ in the  lemma above will be called the order of this exceptional graph.
\end{definition} 
\begin{definition}
 For each  vertex $R$ of an exceptional spin graph $\mathcal{S}_{Q}$ we introduce the $r+1$-tuple of positive integers $(k_{0},k_{1},\ldots,{k_{r}})$ determined by the form (4.3) of the divisor $\mathbb{A}_{R}$, $r<g$. This $r+1$-tuple will be denoted by $\widehat{k}(R)$. A vertex $R$ of a spin graph $\mathcal{S}_{Q}$  will be called a head of the graph if
\begin{equation}
k_{0}(R)=min\{k_{0}(Q');Q'\in{\{\mathcal{S}_{Q}\}}\}
\end{equation}
\end{definition}

\begin{lemma}
Let $\mathcal{S}_{P}$ and $\mathcal{S}_{Q}$ be two different exceptional graphs with heads $P$ and $Q$ respectively. These graphs are isomorphic if and only if, after eventually change of the enumerations of their vertices, $\widehat{k}(P)=(k_{0},k_{1},\ldots,k_{r})$ coincides with  $\widehat{k}(Q)=(k'_{0},k'_{1},\ldots,{k'_{s}})$. 
\end{lemma}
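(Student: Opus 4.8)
The plan is to show that the entire combinatorial content of an exceptional graph---its vertex set, its straight edges with their multiplicities, and its oriented arcs with their labels---is a function of a single partition $\lambda$ of $g+1$ into $r+1$ parts, and that $\lambda$ is precisely the multiset underlying $\widehat{k}$ at any vertex. Once this is in place both implications are immediate. First I would record that the graph determines, and is determined by, the divisors $\mathcal{A}_R$ at its vertices: by Definition 5 the straight edge and the arc joining two vertices $R,S$ encode exactly the multiplicity of $S$ in $\mathcal{A}_R$ together with the multiplicity of $R$ in $\mathcal{A}_S$, so running over all neighbours of a fixed $R$ reconstructs $\mathcal{A}_R$ in full, hence $\mathbb{A}_R=R\,\mathcal{A}_{\widetilde{R}}$ and the tuple $\widehat{k}(R)$. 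Consequently any graph isomorphism, which respects vertices, straight-edge multiplicities and arc labels, preserves the collection $\{\widehat{k}(R)\}$ up to a relabelling of vertices.

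Next I would isolate the structural fact that for every vertex $R$ the multiset of entries of $\widehat{k}(R)$ is one and the same partition $\lambda$. This is what $(4.2)$ delivers: $\mathbb{A}_R$ equals $\mathbb{A}_Q$ or $\mathbb{A}_{\widetilde{Q}}$, and these two divisors are conjugate, hence carry identical multiplicities $k_0,\ldots,k_r$; the tuple $\widehat{k}(R)$ is merely this multiset written with $R$'s own multiplicity placed first, as in $(4.3)$. Thus ``$\widehat{k}(P)$ coincides with $\widehat{k}(Q)$ after reordering'' means exactly that the two graphs share the same $\lambda$, the head serving only to fix the canonical representative in which the minimal part is written first. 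The implication $\Rightarrow$ then follows: an isomorphism preserves $\lambda$ and carries the vertex minimizing the first entry $k_0(R)$---a graph invariant by the reconstruction above---to such a vertex, that is, a head to a head, whence $\widehat{k}(P)=\widehat{k}(Q)$ up to order.

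For the converse I would build the isomorphism explicitly. Writing $\widehat{k}(P)=(k_0,\ldots,k_r)$ with $k_0$ minimal, $(4.2)$ splits the vertices into two conjugate families $\{a_0,\ldots,a_r\}=\{P,\widetilde{P}_1,\ldots,\widetilde{P}_r\}$ and $\{b_0,\ldots,b_r\}=\{\widetilde{P},P_1,\ldots,P_r\}$ paired by conjugation $a_i\leftrightarrow b_i$, and a short computation from $\mathcal{A}_{a_i}=\widetilde{a_i}^{\,-1}\mathbb{A}_{\widetilde{a_i}}$ gives $\mathcal{A}_{a_i}=\bigl(\prod_{j\neq i} b_j^{k_j}\bigr)\,b_i^{k_i-1}$, with the symmetric formula on the $b$-side. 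Feeding these multiplicities into Definition 5 expresses all edge data in terms of $\lambda$ alone: the conjugate pair $a_i$--$b_i$ is joined by a simple straight edge of multiplicity $k_i-1$, each cross pair $a_i$--$b_j$ with $i\neq j$ by a straight edge of multiplicity $\min(k_i,k_j)$ together with an arc labelled $|k_i-k_j|$, and there are no edges inside a family (each $\mathcal{A}_{a_i}$ being supported on the opposite family). Since this description depends only on $\lambda$, two graphs with equal $\lambda$ admit the evident family- and conjugation-preserving bijection $a_i\mapsto a'_i$, $b_i\mapsto b'_i$ as an isomorphism.

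The step I expect to be the main obstacle is the explicit edge computation together with the bookkeeping of degenerate configurations. When some of the listed points coincide---most notably when $\widetilde{Q}\in\{\mathcal{A}_Q\}$, so the two families overlap and the vertex count drops below $2(r+1)$---one must check that the multiplicities produced by Definition 5 still agree between the two graphs, so that no spurious non-isomorphic graph with the same $\lambda$ arises; the same verification guarantees that the higher faces and $n$-cells, being built canonically from the same divisor data, are matched as well.
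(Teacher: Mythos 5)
Your proof is correct and follows the same route as the paper: both reduce the question to the divisor $(\sigma_{P})=P^{-1}\widetilde{P}^{k_{0}-1}P_{1}^{k_{1}}\ldots P_{r}^{k_{r}}$ at the head and observe, via $(4.2)$, that this single datum determines the whole graph --- you simply carry out explicitly the edge computation that the paper compresses into ``the isomorphism follows immediately.'' The only remark worth adding is that the degeneracy you flag at the end cannot actually occur: by $(4.2)$ the vertex set is $\{\mathbb{A}_{P}\}\cup\{\mathbb{A}_{\widetilde{P}}\}$, and these two conjugate $(r+1)$-point sets are always disjoint (since $P\notin\{\mathcal{A}_{P}\}$ forces $\mathbb{A}_{P}\neq\mathbb{A}_{\widetilde{P}}$), so the vertex count is exactly $2(r+1)$ and your edge formulas apply without exceptional cases.
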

\begin{proof}
When the graphs are isomorphic then obviously we have  $\widehat{k}(P)={\widetilde{k}(Q)}$.  Conversely, the equality $\widehat{k}_{P}={\widehat{k}_{Q}}$ means that $r=s$ and that $k_{l}(P)=k_{l}(Q)$ for $l=0,1,\ldots,r$. Equivalently, the divisors of the sections $\sigma_{P}$ and $\sigma_{Q}$ are: $(\sigma_{P})=P^{-1}{\widetilde{P}^{k_{0}-1}}P_{1}^{k_{1}}\ldots{P_{r}^{k_{r}}}$ and $(\sigma_{Q})=Q^{-1}{\widetilde{Q}^{k_{0}-1}}Q_{1}^{k_{1}}\ldots{Q_{r}^{k_{r}}}$. Now the isomorpism between the graphs $\mathcal{S}_{P}$ and $\mathcal{S}_{Q}$ follows immediately.
\end{proof}
For any vertex $Q$ of any spin graph $\mathcal{S}_{P}$ the integer $r\geq{0}$ denotes the number of points of the set $\{\mathcal{A}_{Q}\}$ that are different than the conjugate point $\widetilde{Q}$.  Although the $\epsilon$-degree ${deg_{\epsilon}Q}\geq{r}$  may be different for different vertices of a given exceptional graph $\mathcal{S}_{P}$,  the  integer $r$ is for all vertices exactly the same and, by the definition $6$, it is called the order of a graph. However, exceptional graphs with the same order $r$,  $0\leq{r}<g$, may belong to different classes of isomorphic spin graphs that are posiible on a given surface of genus $g$. 

This means that the integer $r$ does not define uniquely a class of isomorphic spin graphs.
  More precisely, the  lemma 6 implies that the number of possible different classes of isomorphic spin graphs with a given $0\leq{r}<g$ is  equal to the number $N(r)$ of representations of $g+1$ as a sum ${g+1}={k_{0}+k_{1}+\ldots{k_{r}}}$ of non-decreasing integers $1\leq{k_{0}\leq{k_{1}}\leq{\ldots}k_{r}\leq{g-r-1}}$  i.e. $N(r)={\sigma}_{r+1}(g+1)$.
 
 Now, $N(0)=1$ and it corresponds to the unique class of isomorphic graphs with $\widehat{k}=(k_{0})$ where $k_{0}=g+1$. The value of  $N(g-1)$ is also one and  it corresponds to the unique class with $\widehat{k}=(k_{0},k_{1},\ldots,{k_{g-1}})=(1,1,\ldots,{1},2)$.
 
 Let us fix the genus $g$ and let $\mathcal{S}^{r}_{s}={\mathcal{S}^{r}_{s}(g)}$ denote a class of appropriate exceptional graphs with a given $r$ and with $s\in{\{1,2,\ldots,{N(r)}\}}$. Let $m_{r,s}$ denote the number of such graphs that occur on a given hyperbolic surface $\Sigma$. According to $(4.1)$ we must have
 \begin{equation}
 4g=m_{0}B_{0}+B_{1}(m_{1,1}+\ldots{+m_{1,N(1)}})+\ldots{B_{r}(m_{r,1}+\ldots{+m_{r,N(r)}})}+\ldots{+B_{g-1}m_{g-1}}
\end{equation}
Since $M$, given by the formula (1,1), determines the number of possible different classes of isomorphic  exceptional spin graphs on a surface of the genus $g$ we see that we may classify hyperelliptic Riemann surfaces by associating to each of them an $M$-tuple of non-negative integers $(m_{r,s}; r=0,1,\ldots,{g-1}; s=1,\ldots,N(r))$.
\newtheorem{example}{Example}
\begin{example}
Suppose that $g=2$. Since we must have $r\in{\{0,1\}}$ the possible classes $\mathcal{S}^{r}_{s}$ of exceptional graphs are $\mathcal{S}^{0}_{1}$ and $\mathcal{S}^{1}_{1}$. Hence, to any surface of genus $2$ we may associate a pair $(m_{0},m_{1})$ of integers which has to satisfy the condition $8=4m_{0}+2m_{1}$. We see that we may have exactly three different types of such surfaces corresponding to $(m_{0},m_{1})$ equal either to  $(2,0)$ or to $(1,2)$ or to $(0,4)$ respectively.
\end{example}
\begin{example}
Let $g=3$. Now $r\in{\{0,1,2\}}$. A possible exceptional graph $\mathcal{S}_{P}$ with a head $P$ may belong to the following classes: either to $\mathcal{S}^{0}_{1}$ (when $\mathcal{A}_{P}={\widetilde{P}^{3}}$), or to $\mathcal{S}^{1}_{1}$ (when $\mathcal{A}_{P}=P_{1}^{3}$),or to $\mathcal{S}^{1}_{2}$ (when $\mathcal{A}_{P}={\widetilde{P}P_{1}^{2}}$) or to the class $\mathcal{S}^{2}_{1}$ (when $\mathcal{A}_{P}=P_{1}P_{2}^{2}$). Since the condition $(4.6)$ requires that $12=6m_{0}+4(m_{1,1}+m_{1,2})+2m_{2}$ any surface of genus $g=3$ can be characterized by a quadruple $(m_{0},m_{1,1},m_{1,2},m_{2})_{\epsilon}$ of non negative integers which belongs to the set:
\begin{equation*}
\{(2,0,0,0),(1,1,0,1,),(1,0,1,1),(0,1,2,0),(0,2,1,0),
\end{equation*}
\begin{equation*}
(0.1.1.2),(0,1,0,4),(0,0,1,4),(0,0,0,6)\}
\end{equation*}
So, we have nine different types of surfaces of genus $g=3$ equipped with an even, non-singular spin structure $\xi_{\epsilon}$. 
\end{example}
\begin{example}
Suppose that the genus of a hyperelliptic surface is $g=4$. Now we may have $r\in{\{0,1,2,3\}}$. A  posiible exceptional spin graph $\mathcal{S}_{P}$ with a head $P$ belongs to one of the following classes of isomorphic graphs:  to the class $\mathcal{S}_{1}^{0}$ (when $\mathcal{A}_{P}={\widetilde{P}^{4}}$), or to $\mathcal{S}_{1}^{1}$ (when $\mathcal{A}_{P}=P_{1}^{4}$), or to $\mathcal{S}^{1}_{2}$ (when $\mathcal{A}_{P}={\widetilde{P}P_{1}^{3}}$) or to the class $\mathcal{S}^{2}_{1}$ (when $\mathcal{A}_{P}=P_{1}P_{2}^{3}$), or to $\mathcal{S}^{2}_{2}$ (when $\mathcal{A}_{P}=P_{1}^{2}P_{2}^{2}$), or to the class $\mathcal{S}^{3}_{1}$ (when $\mathcal{A}_{P}=P_{1}P_{2}P_{3}^{2}$). Thus, to any surface of genus $g=4$ we will associate a six-tuple $(m_{0},m_{1,1},m_{1,2},m_{2,1},m_{2,2},m_{3})_{\epsilon}$ of non negative integers which satisfy
\begin{equation*}
16=8m_{0}+6(m_{1,1}+m_{1,2})+4(m_{2,1}+m_{2,2})+2m_{3}
\end{equation*}
Each non-negative integer $m_{r,s}$ indicates the number of exceptional spin graphs that occurs on $\Sigma$ which belong to the isomorphic class $\mathcal{S}^{r}_{s}$; where $r=0,1,2,3$ and $s=1,..,N(r)$.
\end{example}

\subsection{Exceptional graphs for arbitrary genus $g$}
Let $\mathcal{S}_{P}$ be an exceptional graph on a surface $\Sigma$ of genus $g$ whose head is $P$. Suppose that the divisor of the section $\sigma_{P}$ of $\xi_{\epsilon}$ is
\begin{equation}
(\sigma_{P})=P^{-1}{\widetilde{P}^{i}}P_{1}^{k_{1}}\ldots{P_{r}^{k_{r}}}; \quad \text{with}\quad i=k_{0}-1,\quad 0\leq{r}\leq{g-1} 
\end{equation}
\begin{equation*}
\text{and with} \quad k_{0}\leq{k_{1}}\leq{\ldots}{\leq}{k_{r}}
\end{equation*}
We will introduce the following notation:
\begin{equation*}
k_{0}=1+i, \quad k_{n}=k_{n-1}+p_{n}; \quad n=1,2,\ldots,r
\end{equation*}
Now, the class ${\mathcal{S}^{r}_{s}}={\mathcal{S}^{r}_{s}(g)}$ of isomorphic exceptional graphs will be denoted by $\mathcal{S}_{i,p_{1},\ldots,p_{r}}$.  Since from $(4.3)$ we have  
\begin{equation*}
 {g+1}=(r+1)(1+i)+rp_{1}+(r-1)p_{2}+{\ldots}+p_{r}
 \end{equation*} 
 we see that the maximal possible value of $i=k_{0}-1$ is 
\begin{equation}
i_{max}=\left\lfloor {\frac{g-r}{r+1}}\right\rfloor
\end{equation}
\subsubsection{r=1}
Let $P$ be a head of an exceptional spin graph with $r=1$ i.e. the divisor $\mathbb{A}_{P}=P^{k_{0}}{\widetilde{P}_{1}^{k_{1}}}$  (equivalently $\mathcal{A}_{P}={\widetilde{P}^{i}}P_{1}^{k_{1}}$) and $g=i+k_{1}$. Since $P$ is a head we must have $k_{0}\leq{k_{1}}$ and hence 
\begin{equation}
 0\leq{i}\leq{\left\lfloor\frac{g-1}{2}\right\rfloor} 
\end{equation} 
The class $\mathcal{S}_{i,p_{1}}(g)={\mathcal{S}_{i,p_{1}}}$ of isomorphic graphs corresponds to ${\widehat{k}}=(k_{0},k_{1})=(1+i,1+i+p_{1})$.  When $i=0$  then $k_{1}=g$ and the graph $\mathcal{S}_{0,g-1}$ is given on Pict3a.
 
\begin{pspicture}(-4.5,-1.5)(4.5,3)
\psline[ showpoints=true]%
(-4,0)(-3,1.5)
\psline[ showpoints=true]%
(-1,1.5)(0,0)
\psline[doubleline=true]
(-3,1.5)(-1,1.5)
\rput(3,1){\rnode{a}{$\widehat{k}=(1,g)$}}
\rput(3,1.4){\rnode{l}{$i=0$}}
\rput(3,0.5){\rnode{f}{$p_{1}=g-1$}}
\rput(-4.3,0){\rnode{b}{$P$}}
\rput(0.3,0){\rnode{c}{$\widetilde{P}$}}
\rput(-3,1.8){\rnode{d}{$P_{1}$}}
\rput(-1,1.8){\rnode{e}{$\widetilde{P_{1}}$}}
\psarc{->}%
(-2,-0.2){2.1}{120}{170}
\psarc{<-}%
(-2,-0.2){2.1}{10}{60}
\rput(-2,1.7){\rnode{g}{$p_{1}$}}
\rput(-4,0.9){\rnode{h}{$p_{1}$}}
\rput(-3.3,0.6){\rnode{i}{$1$}}
\rput(-0.7,0.6){\rnode{j}{$1$}}
\rput(0,0.9){\rnode{k}{$p_{1}$}}

\rput(-2.5,-0.6){\rnode{A}{${\mathcal{S}_{0,p_{1}}}={\mathcal{S}_{0,g-1}}$}}
\rput(0,-1){\rnode{B}{Pict.3a}}
\end{pspicture}

When $i\geq{1}$ then all conjugate vertices must be connected and the general form of the grap $\mathcal{S}_{i,p_{1}}$ is given by Pict3b.

\begin{pspicture}(-4.5,-1.5)(4.5,3)
\psline[ showpoints=true]%
(-4,0)(-3,1.5)
\psline[ showpoints=true]%
(-1,1.5)(0,0)
\psline
(-3,1.5)(-1,1.5)
\psline
(-4,0)(0,0)
\psarc{->}%
(-2,-0.2){2.1}{120}{170}
\psarc{<-}%
(-2,-0.2){2.1}{10}{60}
\rput(-4.3,0){\rnode{e}{$P$}}
\rput(0.3,0){\rnode{f}{$\widetilde{P}$}}
\rput(-3,1.8){\rnode{g}{$P_{1}$}}
\rput(-1,1.8){\rnode{h}{$\widetilde{P_{1}}$}}
\rput(-2,1.7){\rnode{i}{$i+p_{1}$}}
\rput(-4,0.9){\rnode{j}{$p_{1}$}}
\rput(-3.1,0.6){\rnode{k}{$i+1$}}
\rput(-0.9,0.6){\rnode{l}{$i+1$}}
\rput(0,0.9){\rnode{m}{$p_{1}$}}
\rput(-2,-0.3){\rnode{n}{$i$}}

\rput(3,1.4){\rnode{a}{$k_{0}\leq{k_{1}}=k_{0}+p_{1}$}}
\rput(3,1){\rnode{b}{$i={k_{0}-1}\geq0$}}
\rput(3,0.5){\rnode{c}{$k_{1}=1+i+p_{1}$}}
\rput(3,0.1){\rnode{d}{$g={i+k_{1}}=2i+p_{1}+1$}}
\rput(-2.5,-0.7){\rnode{o}{$\mathcal{S}_{i,p_{1}}$}}
\rput(0,-1){\rnode{p}{Pict.3b}}
\end{pspicture}

In particular, when the genus is odd then for $i_{max}=\frac{g-1}{2}$ we have $p_{1}=0$  i.e. $k_{0}=k_{1}$. For the remaining values of $i\leq\frac{g-1}{2}$ we have $p_{1}=2i_{max}-2i\geq{2}$ and hence it is always even. When the genus $g$ is an even integer then $i_{max}<{\frac{g-1}{2}}$ and the corresponding to it $p_{1}$ must be equal to $1$. For the remaining possible values of $i$ we have $p_{1}=2i_{max}-2i+1$ which is always odd.

When $g=3$ then there are two possible classes of isomorphic exceptional graphs corresponding to the values of $(i,p_{1})$ equal to $(0,2)$ and to $(1,0)$.
When $g=4$ then the possibile  graphs $\mathcal{S}_{i,p_{1}}$ correspond to $(i,p_{1})\in\{(0,3),(1,1)\}$. When $g=5$ then $(i,p_{1})\in{\{(04),(1,2),(2,0)\}}$. For all of these values of the genus $g$  we may use the pictures Pict3a and Pict3b to draw appropriate spin graphs. 

 For a general $g\geq{2}$ the total number of possible isomorphic classes $\mathcal{S}_{i,p_{1}}$   is equal to 
\begin{equation*}
N(1)=\sigma_{2}(g+1) 
\end{equation*}
 More precisely, when $g$ is even then 
\begin{equation*}
\widehat{k}=(k_{0},k_{1})\in{\{(1,g),(2,g-1),\ldots,(\frac{g}{2},{\frac{g}{2}}+1)\}}
\end{equation*}
and when $g$ is odd we have
\begin{equation*}
\widehat{k}=(k_{0},k_{1})\in{\{(1,g),(2,g-1),\ldots,(1+{\left\lfloor {\frac{g}{2}}\right\rfloor},1+{\left\lfloor {\frac{g}{2}}\right\rfloor})\}}
\end{equation*}

\subsubsection{r=2}
Suppose that  $\mathcal{S}_{P}$ is  an exceptional spin graphs  with  a head $P$.  We have $\widehat{k}(P)=(k_{0},k_{1},k_{2})=(1+i,1+i+p_{1},1+i+p_{1}+p_{2})$ with $i,p_{1},p_{2}\geq{0}$ and with $i+k_{1}+k_{2}=g$.  The general form of a spin graph with $r=2$ is given by Pict4. Since
\begin{equation}
3i+2p_{1}+p_{2}+2=g
\end{equation}
 such exceptional graph is possible only when the genus of a surface is $g\geq{3}$. The maximal possible value of $i$ is 
\begin{equation*}
i_{max}=\left\lfloor {\frac{g-2}{3}}\right\rfloor
\end{equation*}
When  $r=2$ then, depending from the genus $g$, we may have one or two possible isomorphic classes of graphs with the value $i=i_{max}$. More precisely:
\begin{itemize}
\item When $g\equiv0mod3$ then $i_{max}=\frac{g}{3}-1$ and the property (4.10) implies that $p_{1}=0$, $p_{2}=1$. Hence there is only one class of exceptional graphs with such $i_{m}\equiv{i_{max}}$, namely $\mathcal{S}_{i_{m},0,1}$.
\item When $g\equiv1mod3$ then $i_{max}={\left\lfloor {\frac{g-2}{3}}\right\rfloor}={\left\lfloor {\frac{g}{3}}\right\rfloor}-1$ and  we have  $2p_{1}+p_{2}=2$. There are two possible classes of isomorphic spin graphs with the maximal value of $i=i_{m}$: $\mathcal{S}_{i_{m},0,2}$ and $\mathcal{S}_{i_{m},1,0}$.
\item When $g\equiv2mod3$ then $i_{m}={\left\lfloor {\frac{g-2}{3}}\right\rfloor}={\left\lfloor {\frac{g}{3}}\right\rfloor}$. Now we must have $p_{1}=p_{2}=0$ and hence the unique class $\mathcal{S}_{i_{m},0,0}$ of graphs with the maximal value $i=i_{m}$.
\end{itemize}

From Pict.4 we see that when $i=0$ then a head $P$ of the exceptional graph $\mathcal{S}_{P}\in{\mathcal{S}_{0,p_{1},p_{2}}}$ is not connected to its conjugate $\widetilde{P}$.  We may have a sytuation where either  only one pair of   conjugate vertices ( i.e. $P_{2}$ and $\widetilde{P}_{2}$) is connected or two pairs $\{P_{i},{\widetilde{P_{i}}}\}$ for $i=1,2$ are connected.  In the first case $\mathcal{S}_{P}$ belongs to the unique class $\mathcal{S}_{0,0,g-2}$ and in the latter case we have $\sigma^{2}_{2}(g)$ possible classes $\mathcal{S}_{0,p_{1},p_{2}}$, (with $p_{1}\geq{1}$ and $k_{1}+k_{2}=g$; $ 2\leq{k_{1}}\leq{k_{2}}$) of isomorphic graphs.(Here $\sigma^{l}_{k}(m)$  denotes the number of representations of $m$ as a sum of $k$ non-decreasing integers that are grater than or equal to $l$.) Summarizing, the number of possible graphs with $i=0$ is equal to $\sigma_{2}(g)=1+\sigma^{2}_{2}(g)$.

When $i>0$ then all pairs of conjugate vertices are connected. Since we have $k_{0}={1+i}\geq{2}$ and $k_{0}\leq{k_{1}}\leq{k_{2}}$; $i+k_{1}+k_{2}=g$, it is easy to see that the number of all possible classes of exceptional graphs is now given by $\sigma^{2}_{3}(g+1)$.

The graph $\mathcal{S}_{P}$ is totally symmetric with respect to all of its vertices (see Pict.5) when $k_{0}=k_{1}=k_{2}=1+i$. This is possible only on a surface whose genus is $g\equiv{2mod3}$ and $g={3i+2}\geq{5}$. In particular, for $g=5$ this occurs when $i=i_{max}=1$, $\widehat{k}=(k_{0},k_{1},k_{2})=(2,2,2)$.

We notice that the cardinality of all possible classes of isomorphic exceptional spin graphs with $r=2$ is
\begin{equation}
N(2)=\sigma_{2}(g)+\sigma^{2}_{3}(g+1)={\sigma_{3}}(g+1)
\end{equation}
as expected.

\begin{pspicture}(-4,-4.5)(5.5,4)
\pspolygon[showpoints=true]%
(-1.5,-2.6)(1.5,-2.6)(3,0)(1.5,2.6)(-1.5,2.6)(-3,0)
\rput(5.5,3){\rnode{A}{$k_{0}=1+i$}}
\rput(5.5,2.4){\rnode{B}{$k_{1}=1+i+p_{1}$}}
\rput(5.5,1.8){\rnode{C}{$k_{2}=1+i+p_{1}+p_{2}$}}
\rput(5.5,1.2){\rnode{D}{$s_{1}=i+p_{1}$}}
\rput(5.5,0.6){\rnode{E}{$s_{2}=i+p_{1}+p_{2}$}}
\rput(-0.8,-3.8){\rnode{F}{$\mathcal{S}_{i,p_{1},p_{2}}$}}
\rput(-0.2,-4.3){\rnode{G}{Pict.4}}

\rput(-3.3,0){\rnode{a}{$P$}}
\rput(3.4,0){\rnode{b}{$\widetilde{P}$}}
\rput(-1.6,3.1){\rnode{c}{$P_{2}$}}
\rput(1.6,3.1){\rnode{d}{$\widetilde{P_{1}}$}}
\rput(-1.6,-3.1){\rnode{e}{$P_{1}$}}
\rput(1.6,-3.1){\rnode{f}{$\widetilde{P_{2}}$}}

\psarc {->}%
(-1.5,0.866){1.8}{95}{205}
\psarc{<-}%
(1.5,0.866){1.8}{-25}{85}
\psarc{->}%
(0,1.74){1.8}{35}{145}
\psarc{<-}%
(-1.5,-0.866){1.8}{155}{265}
\psarc {->}%
(1.5,-0.866){1.8}{-85}{25}
\psarc{->}%
(0,-1.74){1.8}{-145}{-35}

\psline(-1.5,2.6)(1.5,-2.6)
\psline(-1.5,-2.6)(-0.1,-0.1)
\psline(0.1,0.1)(1.5,2.6)
\psline(-3,0)(-0.1,0)
\psline(0.1,0)(3,0)

\rput(0,3.3){\rnode{g}{$p_{2}$}}
\rput(0,2.3){\rnode{h}{$k_{1}$}}
\rput(0,-2.3){\rnode{i}{$k_{1}$}}
\rput(0,-3.3){\rnode{j}{$p_{2}$}}
\rput(-1,1.3){\rnode{k}{$s_{1}$}}
\rput(1,1.3){\rnode{l}{$s_{2}$}}
\rput(-1.5,-0.2){\rnode{m}{$i$}}
\rput(-2.1,1){\rnode{n}{$k_{0}$}}
\rput(-2.1,-1){\rnode{o}{$k_{0}$}}
\rput(2.1,1){\rnode{p}{$k_{0}$}}
\rput(2.1,-1){\rnode{q}{$k_{0}$}}
\rput(-3,1.4){\rnode{r}{$p_{1}$}}
\rput(-3.1,-1.4){\rnode{s}{$p_{1}+p_{2}$}}
\rput(3.1,1.4){\rnode{t}{$p_{1}+p_{2}$}}
\rput(3,-1.4){\rnode{u}{$p_{1}$}}

\end{pspicture}

\subsubsection{$r=3$}
Let $P$ be a head of an exceptional spin graph $\mathcal{S}_{P}$ with $r=3$. This means that the section $\sigma_{P}$ of the holomorphic bundle $\xi_{\epsilon}$ has the divisor
\begin{equation*}
 (\sigma_{P})=P^{-1}\mathcal{A}_{P}=P^{-1}{\widetilde{P}}^{i}P_{1}^{k_{1}}P_{2}^{k_{2}}P_{3}^{k_{3}}\quad \text{with} \quad i+k_{1}+k_{2}+k_{3}=g
 \end{equation*}
 Each isomorphic class of graphs with $r=3$ is uniquely determined by a quadruple $(i,p_{1},p_{2},p_{3})$ where $p_{l}=k_{l}-k_{l-1}$ for $l=1,2,3$. Since
 \begin{equation}
 g=4i+3p_{1}+2p_{2}+p_{3}+3
 \end{equation} 
 the genus of a surface that carries such graph must be $g\geq{4}$ and the value of $i=k_{0}-1$ may vary from $0$ to $i_{max}={i_{m}}=\left\lfloor {\frac{g-3}{4}}\right\rfloor$. There are two or one (depending on the genus ) isomorphic classes of graphs with $i=i_{m}$.
 \begin{itemize}
 \item When $g\equiv{3mod4}$ then $i_{m}=\left\lfloor {\frac{g}{4}}\right\rfloor$ and we have only one class $\mathcal{S}_{i_{m},0,0,0}$.
 \item When $g\equiv{2mod4}$ then $i_{m}={\left\lfloor {\frac{g}{4}}\right\rfloor}-1$ and possible classes with $i=i_{m}$ are $\mathcal{S}_{i_{m},1,0,0}$ and $\mathcal{S}_{i_{m},0,0,3}$.
 \item When $g\equiv{1mod4}$ then $i_{m}={\left\lfloor {\frac{g}{4}}\right\rfloor}-1$ again but now we have two possible classes of graphs: $\mathcal{S}_{i_{m},0,1,0}$ and $\mathcal{S}_{i_{m},0,0,2}$.
 \item When $g\equiv{0mod4}$ then  $i_{m}={\frac{g}{4}}-1$ and there is unique possible class of exceptional graphs with $i=i_{m}$, namely  $\mathcal{S}_{i_{m},0,0,1}$
 \end{itemize}
 
 \begin{pspicture}(-4,-4)(5,4)
 \pspolygon [showpoints=true]%
 (-1.5,-2.6)(1.5,-2.6)(3,0)(1.5,2.6)(-1.5,2.6)(-3,0)
 \psline(-1.5,2.6)(1.5,-2.6)
 \psline(-1.5,-2.6)(-0.1,-0.1)
 \psline(0.1,0.1)(1.5,2.6)
 \psline(-3,0)(-0.1,0)
 \psline(0.1,0)(3,0)
 
 \rput(-0.5,-3.1){\rnode{A}{$\mathcal{S}_{i,0,0}$}}
 \rput(-0.1,-3.5){\rnode{B}{Pict.5}}
 
 \rput(-1.8,2.7){\rnode{a}{$P_{2}$}}
 \rput(1.8,2.7){\rnode{b}{$\widetilde{P_{1}}$}}
 \rput(-3.3,0.1){\rnode{c}{$P$}}
 \rput(3.3,0.1){\rnode{d}{$\widetilde{P}$}}
 \rput(-1.8,-2.7){\rnode{e}{$P_{1}$}}
 \rput(1.8,-2.7){\rnode{f}{$\widetilde{P_{1}}$}}
 
 \rput(4.4,2.5){\rnode{C}{$g={3i+2}$}}
 \rput(4.4,2){\rnode{D}{$i={\frac{g-2}{3}}=i_{max}$}}
 \rput(4.4,1.5){\rnode{E}{$k_{0}={1+i}$}}
 
 \rput(0,2.8){\rnode{g}{$k_{0}$}}
 \rput(0,-2.4){\rnode{h}{$k_{0}$}}
 \rput(-1.95,1.3){\rnode{i}{$k_{0}$}}
 \rput(1.95,1.3){\rnode{j}{$k_{0}$}}
 \rput(-1.95,-1.3){\rnode{k}{$k_{0}$}}
 \rput(1.95,-1.3){\rnode{l}{$k_{0}$}}
 
 \rput(-1.5,-0.2){\rnode{m}{$i$}}
 \rput(-0.9,1){\rnode{n}{$i$}}
 \rput(0.9,1){\rnode{o}{$i$}}
 
 \end{pspicture}

We observe that only on a surface of genus $g\equiv{3mod4}$, $g\geq{7}$ we may have an exceptional spin graph whose all pairs of conjugate vertices are connected and the graph is symmetric with respect to all of its vertices (i.e. $k_{0}=k_{1}=k_{2}=k_{3}\geq{2}$)

When $i=0$, i.e. when a head $P$ is not connected with its conjugate, then the number of all possible classes of equivalent graphs is equal to $\sigma_{3}(g)$. More precisely:
\begin{itemize}
 \item When only one pair $P_{3}$ and $\widetilde{P}_{3}$ of conjugate vertises is connected then we have $\widehat{k}(P)=(1,1,1,g-2)=(1,1,1,1+p_{3})$ and $\mathcal{S}_{P}\cong{\mathcal{S}_{0,0,0,g-3}}$.
\item When two pairs, $P_{2}$, $\widetilde{P}_{2}$ and $P_{3}$, $\widetilde{P}_{3}$ are connected then there are $\sigma_{2}(g-1)-1$ possible isomorphic classes $\mathcal{S}_{0,0,p_{2},p_{3}}$, $p_{2}>0$, of exceptional spin graphs.
\item When only a head of an exceptional graph is not connected with its conjugate (i.e. when $p_{1}>0$) then $\widehat{k}(P)=(1,1+p_{1},1+p_{1}+p_{2},1+p_{1}+p_{2}+p_{3})=(k_{0},k_{1},k_{2},k_{3})$ with $2\leq{k_{1}}\leq{k_{2}}\leq{k_{3}}$. The number of possible isomorphic classes of such graphs is $\sigma_{3}^{2}(g)=\sigma_{3}(g)-\sigma_{2}(g-1)$.
\end{itemize}

When $i>0$ then all pairs of conjugate vertices are connected.  Now, for each $1\leq{i}\leq{\left\lfloor {\frac{g-3}{4}}\right\rfloor}=i_{max}$ there are $\sigma_{3}^{k_{0}}(g-i)$ possible classes of isomorphic spin graphs.  The set $\{\mathcal{S}_{i,p_{1},p_{2},p_{3}}\}$,  $i>0$, of all such  classes  has $\sigma^{2}_{4}(g+1)$ elements.( This number is equal to the number of representations of $g+1$ as a sum $k_{0}+k_{1}+k_{2}+k_{3}$ of integers that satisfy $2\leq{k_{0}}\leq{k_{1}}\leq{k_{2}}\leq{k_{3}}$.) 

Summarizing, the total number $N(3)$ of isomorphic classes of possible exceptional spin graphs with $r=3$ is equal
\begin{equation}
N(3)=\sigma_{3}(g)+\sigma^{2}_{4}(g+1)=\sigma_{4}(g+1)
\end{equation}
 
 \subsubsection{$r>3$}
 Keeping the same notation as above, an exceptional graph $\mathcal{S}_{P}$  on a surface $\Sigma$ of genus $g\geq{r+1}$ belongs to an isomorphic class $\mathcal{S}_{i,p_{1},\ldots,p_{r}}$  where $i+k_{1}+\ldots{+k_{r}}=g$ , $1+i\leq{k_{1}}\leq{\ldots}\leq{k_{r}}$ and 
 \begin{equation}
 g=(r+1)i+rp_{1}+\ldots{+2p_{r-1}+p_{r}+r}
 \end{equation}
  Now $i$ belongs to the set $\{0,1,\ldots,i_{m}\}$ with $i_{m}=\left\lfloor {\frac{g-r}{r+1}}\right\rfloor$. For the graphs for which a head $P$ is not connected with its conjugate vertex $\widetilde{P}$, i.e. for the graphs with $i=0$, the cardinality of the set $\{\mathcal{S}_{0,p_{1},\ldots,p_{r}}\}$ is equal to $\sigma_{r}(g)$. When $i>0$ then for each  $1\leq{i}\leq{i_{m}=\left\lfloor {\frac{g-r}{r+1}}\right\rfloor}$   there are $\sigma_{r}^{k_{0}}(g-i)$ possible classes of isomorphic exceptional graphs. Hence the total number of classes with $i>0$ is equal to $\sigma^{2}_{r+1}(g+1)$.Thus
  \begin{equation}
  N(r)=\sigma_{r}(g)+\sigma^{2}_{r+1}(g+1)=\sigma_{r+1}(g+1)
  \end{equation}
For the maximal possible value of $r$, i.e. for $r=g-1$ this formula gives $N(g-1)=\sigma_{g}(g+1)=1$ as expected. We see that the number of all isomorphism classes of exceptional graphs that are possible on a surface $\Sigma$ of genus $g$ is
\begin{equation}
M=M(g)=\sum^{g-1}_{r=0}N(r)=1+\sum_{r=1}^{g-1}\sigma_{r+1}(g+1)=1+\sum^{g}_{m=2}\sigma_{m}(g+1)
\end{equation}

\section{Summary}
Let $\Sigma$ be a hyperelliptic Riemann surface and let $\xi_{\epsilon}$ be any even, non singular spin bundle on this surface. The hyperelliptic involution results in the existence of interrelations between some sections of $\xi_{\epsilon}$.   These relations always link merely finite number of meromorphic sections of this bundle, each with a single, simple pole. 

In almost all cases the number of such related sections is equal to $2g+2$.  In two cases this number is $g+1$ and besides of this, we have finite number of cases when the number of interrelated sections vary. In other words, the bundle $\xi_{\epsilon}$ introduses $\epsilon$-foliation of the surface $\Sigma$ whose all leaves consist of finite number of points. These points are vertices of spin graphs. Each graph  carries all informations about the sections of $\xi_{\epsilon}$ with the unique simple pole at a given vertex of this graph (i.e. at a given point of the leaf) .

 Almost all leaves of the $\epsilon$-foliation  have $2g+2$ points that are vertices of standard graphs. These graphs  are isomorphic to each other i.e. they all belong to the unique, (standard)  class of spin graphs $\mathcal{S}(g)$. There are two leaves through the Weierstrass points $\mathcal{W}$, each consisting of $g+1$ points (they are vertices of Weierstrass graphs).

The remaining leaves of the $\epsilon$-foliation are associated with exceptional graphs.   The number of non-isomorphic classes of exceptional spin graphs that are possible on a surface of genus $g$ is equal to $M(g)$ (see the formula $(4.16)$). However, on a given surface $\Sigma$ not all types of exceptional graphs  have to occur.  Thus, we may classify hyperelliptic Riemann surfaces of genus $g$ by giving an $M(g)$-tuple of non-negative integers $m_{r,s}$, $r=0,1,\ldots,g-1$; $s=1,2,\ldots,N(r)$.  Each integer $m_{r,s}$  indicates how many exceptional graphs belonging  to a given class of isomorphic graphs, are actually present on a surface. These integers must satisfy the condition
\begin{equation}
4g=B_{0}m_{0}+\ldots{+{B_{r}}(m_{r,1}+\ldots{+m_{r,N(r)}})}+\ldots{+B_{g-1}m_{g-1}}
\end{equation} 
 where $B_{r}=2(g-r)$ is the total branch number produced by any exceptional spin graph which belongs to a class $\mathcal{S}_{i,p_{1},\ldots,p_{r}}$ with a given $r=0,1,\ldots,g-1$.
 
 This $\epsilon$-foliation of a hyperbolic Riemann surface $\Sigma$ together with the spin-graph structure of each of its leaf will allow us (in []) to attach to each point $P$ of $\Sigma$ a concrete spin group $\mathcal{G}_{P}$.

\end{document}